\newtheorem{theorem}{Theorem}[section]
\newtheorem{lemma}[theorem]{Lemma}
\newtheorem{corollary}[theorem]{Corollary}
\theoremstyle{definition}
\newtheorem{definition}[theorem]{Definition}
\newcommand{\bF}{\mathbb F}
\newcommand{\bR}{\mathbb R}
\newcommand{\bE}{\mathbb E}
\newcommand{\bZ}{\mathbb Z}
\newcommand{\bN}{\mathbb N}
\newcommand{\cB}{\mathcal{B}}
\newcommand{\cD}{\mathcal{D}}
\newcommand{\cL}{\mathcal{L}}
\newcommand{\cN}{\mathcal{N}}
\newcommand{\cV}{\mathcal{V}}
\newcommand{\nni}{\mathbb{Z}^+}
\newcommand{\eps}{\varepsilon}
\newcommand{\erdos}{Erd\H{o}s}
\DeclareMathOperator{\ex}{ex}
\DeclareMathOperator{\PG}{PG}
\DeclareMathOperator{\EX}{EX}
\DeclareMathOperator{\AG}{AG}
\DeclareMathOperator{\BB}{BB}
\newcommand{\del}{\!\setminus\!}
\begin{document}

\title{Stability and exact Tur\'an numbers for matroids}
\author[Liu]{Hong Liu}
\address{Mathematics Institute and DIMAP, University of Warwick, Coventry, CV4 7AL, UK.  Email address: {\tt h.liu.9@warwick.ac.uk}}
\author[Luo]{Sammy Luo}
\address{Department of Mathematics, Massachusetts Institute of Technology, Cambridge MA, USA. Email address: {\tt sammyluo@mit.edu}}
\author[Nelson]{Peter Nelson}
\address{Department of Combinatorics and Optimization, University of Waterloo, Waterloo, Canada. Email address: {\tt apnelson@uwaterloo.ca}}
\author[Nomoto]{Kazuhiro Nomoto}
\address{Department of Combinatorics and Optimization, University of Waterloo, Waterloo, Canada. Email address: {\tt 	knomoto@uwaterloo.ca}}
\thanks{The first author was supported by the Leverhulme Trust Early Career Fellowship~ECF-2016-523.}
\thanks{The third author was supported by a discovery grant from the Natural Sciences and Engineering Research Council of Canada}
\subjclass{05B35}
\keywords{matroids, regularity, arithmetic combinatorics, linear forms, extremal combinatorics, Tur\'an numbers}
\date{\today}
\begin{abstract}
	We consider the Tur\'an-type problem of bounding the size of a set $M \subseteq \bF_2^n$ that does not contain a linear copy of a given fixed set $N \subseteq \bF_2^k$, where $n$ is large compared to $k$. An \erdos-Stone type theorem [\ref{gn12}] in this setting gives a bound that is tight up to a $o(2^n)$ error term; our first main result gives a stability version of this theorem, showing that such an $M$ that is close in size to the upper bound in [\ref{gn12}] is close in edit distance to the obvious extremal example. Our second result shows that the error term in [\ref{gn12}] is exactly controlled by the solution to one of a class of `sparse' extremal problems, and in many cases eliminates the error term completely to give a sharp upper bound on $|M|$.
\end{abstract}

\maketitle

\section{Introduction}

This paper deals with Tur\'{a}n-type problems in the arithmetic setting of subsets of $\bF_2^n$, where we fix a set $N \subseteq \bF_2^k$ and consider, for $n$ much larger than $k$, the size of a set $M \subseteq \bF_2^n$ that does not contain any subset that is the image of $N$ under an injective linear map $\varphi \colon \bF_2^k \to \bF_2^n$. This is analogous to excluding a fixed subgraph $H$ from a graph $G$. Such problems have been considered both in the language of arithmetic combinatorics [\ref{bfhhl},\ref{gw10},\ref{hhl}] and equivalently matroid theory [\ref{gn14},\ref{gn16},\ref{tidor}]; here we will use the term `matroid' for brevity to describe the relevant notions of containment and isomorphism, as well as to highlight the strong analogies with graph theory, and to describe the `host' object $M$ and the `system of linear forms' being excluded in a unified way. While they are stated combinatorially, all of our results depend on a Fourier-analytic regularity lemma of Hatami et al [\ref{hhl}] and a new associated counting lemma due to the second author [\ref{luo}].

For an integer $n \ge 0$, we define a \emph{matroid} of \emph{dimension} $m$ to be a set $M \subset \bF_2^m\del\{0\}$. Write $\dim(M)$ for $m$. Given matroids $M \subset \bF_2^m$ and $N \subset \bF_2^n$,  A linear map $\varphi\colon \bF_2^n \to \bF_2^m$ is a \emph{homomorphism} from $N$ to $M$ if $\varphi(N) \subseteq M$. If $\varphi$ is bijective and $\varphi(N) = M$, then $\varphi$ is an \emph{isomorphism}. A subset of $M$ that is the image of $N$ under some injective linear map $\psi\colon \bF_2^n \to \bF_2^m$ is an \emph{$N$-restriction} of $M$. (Note that if $M$ is obtained by embedding a copy of $N$ in a higher dimensional space, then $|M| = |N|$ and $M$ contains $N$ as a restriction but $N$ does not contain $M$; this is analogous to adding isolated vertices to a graph). The \emph{rank} of a matroid $M \subset \bF_2^n$, written $r(M)$, is the smallest dimension of a subspace of $\bF_2^n$ containing $M$; clearly $r(M) \le \dim(M)$. 

Matroids in our sense essentially correspond to \emph{simple binary matroids} in the usual sense, except we distinguish between a matroid $M$ and a matroid $M^+$ obtained by embedding a copy of $M$ in a larger subspace. Our main results still hold if `matroid' is just read as `simple binary matroid'. In the other direction, stating that a matroid $M$ does not contain $N$ is equivalent to insisting that $M$ does not contain a nondegenerate copy of some `system of linear forms' in the language of [\ref{hhl}]. If $N = \{w_1,\dotsc,w_s\} \subseteq \bF_2^k$, then the corresponding system is $\cL_N = \{L_1, \dotsc, L_s\}$, where each linear form $L_i\colon (\bF_2^n)^k \to \bF_2^n$ is defined by $L_i(v_1, \dotsc,  v_k) = \sum_{j=1}^k w_{i,j}v_j$.

Write $\PG(t-1,2)$ for the rank-$t$ \emph{projective geometry}: this is the $t$-dimensional matroid $\bF_2^t \del \{0\}$ with $2^t-1$ elements. The \emph{critical number} of a matroid $M \subset \bF_2^n$, written $\chi(M)$, is the minimum $c$ for which $M$ has a homomorphism to $\PG(c-1,2)$.  Equivalently, $\chi(M)$ is the smallest codimension of a subspace of $\bF_2^n$ that is disjoint from $M$. This parameter, which is an analogue of chromatic number, is invariant under isomorphism and monotone with respect to containment. A matroid of critical number $1$ is \emph{affine}. The maximal rank-$t$ affine matroid, the complement of a hyperplane, is the rank-$t$ \emph{affine geometry}, denoted $\AG(t-1,2)$.

Given a set $\cN$ of matroids, we say $M$ is \emph{$\cN$-free} if $M$ has no restriction isomorphic to a matroid in $\cN$; write $\EX(\cN)$ for the class of $\cN$-free matroids. For each integer $n$, let $\ex(\cN,n)$ denote the `Tur\'an number' for $\cN$: the maximum size of an $n$-dimensional matroid in $\EX(\cN)$. An $n$-dimensional matroid in $\EX(\cN)$ of size $\ex(\cN,n)$ is \emph{extremal} in $\EX(\cN)$. A classical result of Bose and Burton gives this function exactly when $\cN$ consists of a single projective geometry. (Note that $\PG(t,2)$ has dimension $t+1$.)

\begin{theorem}\label{bbt}
	$\ex(\{\PG(t,2)\},n) = 2^n(1-2^{-t})$ for all $1 \le t < n$. 
\end{theorem}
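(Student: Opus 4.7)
The plan is to prove matching lower and upper bounds. Throughout, recall that $M$ contains a $\PG(t,2)$-restriction if and only if there is a $(t+1)$-dimensional subspace $W \le \bF_2^n$ with $W \del \{0\} \subseteq M$, since $\PG(t,2) = \bF_2^{t+1} \del \{0\}$.

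For the lower bound, I take a codimension-$t$ subspace $H \le \bF_2^n$ and set $M := \bF_2^n \del H$, which has $|M| = 2^n - 2^{n-t}$. For any $(t+1)$-dimensional subspace $W$, the dimension formula gives $\dim(W \cap H) \ge (t+1) + (n-t) - n = 1$, so $W$ contains a nonzero vector lying in $H$, which is not in $M$. Hence no $(t+1)$-dimensional subspace $W$ satisfies $W \del \{0\} \subseteq M$, so $M$ is $\{\PG(t,2)\}$-free and witnesses $\ex(\{\PG(t,2)\},n) \ge 2^n - 2^{n-t}$.

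For the upper bound, I plan a greedy embedding argument. Suppose $M \subseteq \bF_2^n \del \{0\}$ satisfies $|M| > 2^n - 2^{n-t}$, and let $M^c := \bF_2^n \del M$, so $|M^c| \le 2^{n-t} - 1$. I will build a chain of subspaces $\{0\} = W_0 \subset W_1 \subset \cdots \subset W_{t+1}$ with $\dim W_k = k$ and $W_k \del \{0\} \subseteq M$ for every $k$; the final $W_{t+1}$ is then a $\PG(t,2)$-restriction of $M$, contradicting the hypothesis. To extend $W_k$ to $W_{k+1}$, I seek a vector $v \in \bF_2^n$ with $v + W_k \subseteq M$; any such $v$ automatically lies outside $W_k$ (otherwise $0 = v+v$ would lie in $M$), and I may then set $W_{k+1} := W_k \cup (v + W_k)$, which has $W_{k+1} \del \{0\} \subseteq M$ by construction and the induction hypothesis on $W_k$. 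The set of bad vectors $v$ for this extension step is contained in $\bigcup_{w \in W_k}(M^c + w)$, of size at most $|W_k| \cdot |M^c| \le 2^k(2^{n-t} - 1) = 2^{n+k-t} - 2^k$.

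No serious obstacle arises: for every $k \le t$ this bound is at most $2^n - 2^k < 2^n$, so a valid extension vector exists at each step. The estimate is nearly tight at the final step $k = t$, where it becomes $2^n - 2^t$; this is precisely the slack that the extremal hypothesis $|M| > 2^n - 2^{n-t}$ is calibrated to exploit, matching the lower-bound construction exactly. Iterating $t+1$ times starting from $W_0 = \{0\}$ produces the forbidden $\PG(t,2)$-restriction, completing the proof.
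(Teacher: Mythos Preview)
Your proof is correct. Note, however, that the paper does not actually give its own proof of this theorem: it is quoted as a classical result of Bose and Burton and simply cited. So there is no ``paper's proof'' to compare against here.

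That said, your argument is clean and self-contained. The lower bound via the complement of a codimension-$t$ subspace is the standard extremal example (the Bose--Burton geometry $\BB(n-1,2,t)$ described immediately after the theorem). Your upper bound is a direct greedy embedding: iteratively extend a flag of subspaces contained in $M$ by finding a coset of the current $W_k$ lying entirely inside $M$, which succeeds because the complement $M^c$ is too small to cover all cosets. The key estimate $|W_k|\cdot|M^c| \le 2^t(2^{n-t}-1) = 2^n - 2^t < 2^n$ is exactly what is needed at every step $k \le t$, and your observation that a good $v$ is automatically outside $W_k$ (since $0 \in M^c$) is correct. This is a perfectly acceptable proof of the Bose--Burton bound.
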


Equality holds here for the \emph{Bose-Burton geometry} $\BB(n-1,2,t)$ of dimension $n$ and order $t$, defined as the complement of an $(n-t)$-dimensional subspace of $\bF_2^n$; this matroid has critical number $t$ and size $2^n-2^{n-t}$, and plays the role of a balanced complete multipartite graph. For each matroid $N$ of critical number $k$, the matroid $\BB(n-1,2,k-1)$ is $N$-free; the following analogue of the \erdos-Stone theorem shows that it is the largest $N$-free matroid up to an error term. 
\begin{theorem}[Matroidal Erd\H{o}s-Stone Theorem {[\ref{gn12}]}]\label{es}
	For every $N$,
	 \[\ex(\{N\},n) = 2^n(1-2^{1-\chi(N)}+o(1)).\]
\end{theorem}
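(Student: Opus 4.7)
Setting $k=\chi(N)$, I will argue the two inequalities separately. The lower bound $\ex(\{N\},n)\ge 2^n(1-2^{1-k})$ is witnessed by the Bose-Burton geometry $M_0=\BB(n-1,2,k-1)$, which has size exactly $2^n-2^{n-k+1}$ and is $N$-free: any $N$-restriction $\iota\colon\bF_2^k\to\bF_2^n$ of $M_0$ could be composed with a homomorphism $M_0\to\PG(k-2,2)$ to yield a homomorphism $N\to\PG(k-2,2)$, contradicting $\chi(N)=k$.

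For the matching upper bound, I will fix $\eps>0$, suppose toward contradiction that $M\subseteq\bF_2^n\del\{0\}$ is $N$-free with $|M|\ge 2^n(1-2^{1-k}+\eps)$, and derive a contradiction once $n$ is large in terms of $\eps$ and $N$. The strategy is a matroidal analogue of the regularity-based proof of the classical Erd\H{o}s-Stone theorem. First I would apply the Hatami-Hatami-Lov\'asz Fourier-analytic regularity lemma [\ref{hhl}] to the indicator of $M$, obtaining a subspace $V\le\bF_2^n$ whose codimension $d$ is bounded in terms of $\eps$ and $N$, such that on almost all cosets $v+V$ the set $M\cap(v+V)$ is Gowers-pseudorandom with density $\alpha_v=|M\cap(v+V)|/|V|$. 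Next I would fix a small threshold $\delta=\delta(\eps,N)>0$ and define the \emph{reduced matroid} $M'\subseteq\bF_2^n/V$ as the set of cosets with $\alpha_v\ge\delta$; choosing $\delta$ small enough, the omitted cosets (irregular or of low density) contribute at most $\tfrac{\eps}{2}2^n$ to $|M|$, so $|M'|/2^d>1-2^{1-k}$. Theorem~\ref{bbt} applied in $\bF_2^n/V\cong\bF_2^d$ then produces an injective linear $\psi\colon\bF_2^k\to\bF_2^n/V$ with $\psi(e)\in M'$ for every nonzero $e\in\bF_2^k$, i.e.\ a $\PG(k-1,2)$-restriction of $M'$.

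The third and key step is to lift this restriction to an honest $N$-restriction of $M$. Writing $\pi\colon\bF_2^n\to\bF_2^n/V$ for the quotient, I would pick lifts $v_j\in\pi^{-1}(\psi(e_j))$ uniformly at random for $j=1,\dots,k$ and let $\tilde\psi\colon\bF_2^k\to\bF_2^n$ be the linear map with $\tilde\psi(e_j)=v_j$. For any $w\in N$ the image $\tilde\psi(w)=\sum_jw_jv_j$ automatically lies in $\pi^{-1}(\psi(w))$, a coset on which $M$ is pseudorandom with density at least $\delta$, so the counting lemma of [\ref{luo}] applied to the system of linear forms $\cL_N$ shows that the expected number of tuples $(v_1,\dots,v_k)$ with $\tilde\psi(w)\in M$ for every $w\in N$ is at least $(\delta^{|N|}-o(1))|V|^k$, which is positive for $n$ large. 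Any such tuple gives an injective $\tilde\psi$ (since $\psi$ is injective) with $\tilde\psi(N)\subseteq M$, contradicting $N$-freeness.

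The main obstacle will be the precise application of the counting lemma of [\ref{luo}]: one must verify that the Gowers-type pseudorandomness produced by the regularity lemma on each high-density coset really does transfer, via the specific linear system $\cL_N$, into an accurate count of $N$-restrictions with prescribed coset behavior, and that the quantitative error terms from both lemmas are dominated by $\delta^{|N|}$ once $\eps$, $\delta$, $d$, and $n$ are chosen in the correct order. The remaining reductions (forming the reduced matroid and invoking Theorem~\ref{bbt} in the quotient) are routine once the right counting framework is in place.
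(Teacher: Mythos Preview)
Your outline matches the paper's route almost exactly: the paper observes (right after Lemma~\ref{blowup}) that the theorem follows from the Weak Counting Lemma plus Bose--Burton. Concretely, one applies Lemma~\ref{blowup} to obtain a ``cleaned'' matroid $M_0\subseteq\bF_2^n$ with $|M_0|\ge|M|-\delta 2^n>2^n(1-2^{1-k})$; Theorem~\ref{bbt} then forces a $\PG(k-1,2)$-restriction inside $M_0$, and since $\chi(N)=k$ the matroid $N$ is homomorphic to that $\PG(k-1,2)$, hence to $M_0$, so the second bullet of Lemma~\ref{blowup} yields $N$-restrictions in $M$. Note that the reduced object lives in $\bF_2^n$ itself, not in a quotient, so no separate ``lifting'' step is needed.

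The one genuine inaccuracy in your write-up is the description of the regularity output as a \emph{subspace} $V$ with coset atoms. The decomposition theorem actually used here (Theorem~\ref{decomp}, from [\ref{bfhhl}]) produces a polynomial factor whose atoms are level sets of non-classical polynomials, not cosets of a subspace. A purely linear (Green-style) arithmetic regularity only controls the $U^2$-norm and would suffice only for $N$ whose associated linear system $\cL_N$ has complexity~$1$; for general $N$ one needs higher-order uniformity, and then there is no quotient group in which to place the reduced matroid. This is precisely why the paper's packaging keeps $M_0$ as a union of atoms inside $\bF_2^n$ and applies Bose--Burton there directly. Your instinct that the counting lemma of~[\ref{luo}] is the crux is right; just be aware that the ambient structure it acts on is a polynomial factor, not a coset partition.
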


 Analogously to excluding a bipartite subgraph, this statement is true but less descriptive when $N$ is affine, simply giving that $N$-free matroids are sparse. Such `sparse' extremal functions are known to be bounded above by $2^{\alpha n}$ for some $\alpha < 1$ (see [\ref{bq}, Lemma 21]), but, as in the case of graphs, are hard to asymptotically determine, even in the simplest nontrivial case where $N$ is a minimal linearly dependent set of four vectors in $\bF_2^3$. 
 
 When $\chi(N) > 1$, on the other hand, the above theorem asymptotically determines the function.
 Our first main result is a stability version of Theorem~\ref{es}, showing that any $N$-free matroid that is close in size to $\ex(\{N\},n)$ is close in edit distance to a Bose-Burton geometry.  

\begin{theorem}\label{esstability}
	Let $N$ be a matroid. For all $\delta > 0$ there exists $\eps > 0$ such that, if $M$ is an $n$-dimensional, $N$-free matroid with \[||M| - (1-2^{1-\chi(N)})2^n| \le \eps 2^n,\] then $|M \Delta B| < \delta 2^n$ for some matroid $B \cong BB(n-1,2,\chi(N)-1)$.
\end{theorem}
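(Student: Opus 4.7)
Set $k = \chi(N)$. My plan is to adapt the classical Erd\H{o}s--Simonovits graph stability argument, using the Fourier-analytic regularity lemma of Hatami, Hatami, and Lov\'asz together with Luo's counting lemma in place of Szemer\'edi regularity. First I apply arithmetic regularity to the indicator $1_M$, producing a subspace $H \le \bF_2^n$ of codimension $r$ bounded in terms of $\eps$ and $N$, such that the coset-density function $f(v+H) := |M \cap (v+H)|/|H|$ controls homomorphism counts from $N$ into $M$. I then fix a small threshold $\tau = \tau(\delta)$ and define the reduced matroid
\[
\tilde M := \{\, v + H \neq H : f(v+H) \ge \tau \,\} \subseteq (\bF_2^n/H) \setminus \{0\} \cong \bF_2^r \setminus \{0\}.
\]

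The key structural claim is that $\chi(\tilde M) \le k - 1$. If not, $\tilde M$ contains a $\PG(k-1,2)$-restriction via an injective $\psi \colon \bF_2^k \hookrightarrow \bF_2^r$; composing with a homomorphism $\varphi \colon \bF_2^{\dim N} \to \bF_2^k$ witnessing $\chi(N) = k$ yields a homomorphism $N \to \tilde M$, so every coset $(\psi \circ \varphi)(w)$ with $w \in N$ has $f$-density at least $\tau$. Luo's counting lemma then produces on the order of $|H|^{\dim N} \tau^{|N|}$ linear lifts $\phi \colon \bF_2^{\dim N} \to \bF_2^n$ with $\phi(N) \subseteq M$, and a union-bound shows that only a $(2^{\dim N}-1)/|H|$ fraction of lifts of a fixed $\bar\phi = \psi \circ \varphi$ can fail to be injective, so for $n$ large an injective $\phi$ exists, contradicting $N$-freeness. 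Hence $\tilde M \subseteq \bF_2^r \setminus U$ for some subspace $U \le \bF_2^r$ of dimension $r - k + 1$, and the preimage $\hat U \le \bF_2^n$ satisfies $\bF_2^n \setminus \hat U \cong BB(n-1,2,k-1)$.

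Closeness follows from a density accounting. Since $|M| = |H| \sum_{v+H} f(v+H)$ and $|M| \ge (1-2^{1-k}-\eps)2^n$, the total $f$-mass is at least $(1-2^{1-k}-\eps)2^r$; but $f \le 1$ on $\tilde M$, $f < \tau$ off $\tilde M$, and $|\tilde M| \le (1-2^{1-k})2^r$ by Theorem~\ref{bbt} applied in $\bF_2^r$. The resulting sandwich
\[
(1-2^{1-k}-\eps)2^r \;\le\; \sum_{v+H} f(v+H) \;\le\; |\tilde M| + \tau \cdot 2^r
\]
forces $|\tilde M|$ to lie within $O(\tau+\eps)2^r$ of $(1-2^{1-k})2^r$, and the $(1-f)$-mass on $\tilde M$ to be $O(\tau+\eps)2^r$. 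Writing $|M \Delta (\bF_2^n \setminus \hat U)|$ as the $f$-mass on cosets inside $\hat U$ (bounded by $\tau \cdot 2^{n-k+1}$) plus the $(1-f)$-mass on cosets outside $\hat U$ (bounded by $|H| \cdot O((\tau+\eps)2^r)$), both pieces are $O((\tau+\eps)2^n)$, which is at most $\delta \cdot 2^n$ upon choosing $\tau$, then $\eps$, sufficiently small.

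The main obstacle is the extraction of an \emph{injective} $N$-restriction from a reduced homomorphism in the second paragraph: this demands that Luo's counting-lemma error be small compared to $|H|^{\dim N} \tau^{|N|}$, forcing the regularity parameter to scale with both $\tau$ and the combinatorial complexity of $N$, while $|H|$ must also be large enough to swallow the $(2^{\dim N}-1)/|H|$ non-injectivity loss. Coordinating these parameter dependences so that $\delta \to 0$ as $\eps \to 0$, with intermediate quantities $r$ and $\tau$ depending only on $\delta$ and $N$, is the principal technical balancing act; the Bose--Burton step and the density accounting are then comparatively routine.
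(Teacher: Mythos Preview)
Your argument has a genuine gap in the second paragraph: the implication ``if $\chi(\tilde M)\ge k$ then $\tilde M$ contains a $\PG(k-1,2)$-restriction'' is false. Critical number at least $k$ only says that no codimension-$(k-1)$ subspace avoids $\tilde M$; it does not force a full projective geometry to appear. A concrete counterexample for $k=2$ is $\{e_1,e_2,e_3,e_4,e_1{+}e_2{+}e_3{+}e_4\}\subseteq\bF_2^4$: no three of these elements sum to zero, so the set is $\PG(1,2)$-free, yet the unique functional with $\ell(e_i)=1$ for all $i$ vanishes on the fifth element, so no hyperplane misses the set and $\chi\ge 2$. What your counting step actually establishes is that $\tilde M$ is $\PG(k-1,2)$-free; Bose--Burton then bounds $|\tilde M|$, but does not by itself deliver the structural containment $\tilde M\subseteq\bF_2^r\setminus U$ on which your density accounting depends.

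The missing ingredient is the matroidal Andrasfai--\erdos--S\'os theorem of Govaerts and Storme (Theorem~\ref{aes}): a $\PG(k-1,2)$-free matroid of density exceeding $1-2^{1-k}-3\cdot 2^{-2-k}$ has critical number at most $k-1$. Your density sandwich already gives $|\tilde M|\ge(1-2^{1-k}-\eps-\tau)2^r$, so inserting Theorem~\ref{aes} at this point repairs the argument, and this is exactly the paper's route: Lemma~\ref{blowup} packages regularity and counting to produce a dense $\PG(k-1,2)$-free $M_0\subseteq\bF_2^n$, Theorem~\ref{aes} places $M_0$ inside a Bose--Burton geometry $B$, and the edit-distance bound then follows in two lines. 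A secondary technical point: the higher-order regularity you cite partitions $\bF_2^n$ into atoms of a polynomial factor rather than cosets of a subspace $H$, so the reduced object naturally lives in $\bF_2^n$ itself, not in a quotient $\bF_2^n/H$; this is why the paper keeps $M_0$ inside the ambient space throughout.
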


Our other main result is Theorem~\ref{maintech}, which is somewhat technical to state here, but in the spirit of [\ref{s97}] reduces the problem of finding $\ex(\cN,n)$ for arbitary $\cN$ to the sparse case where $\cN$ contains an affine matroid. Strikingly, the theorem also gives a way to determine Tur\'an numbers exactly in a much wider range of cases than is currently possible in the graphical setting. We defer the precise statement of Theorem~\ref{maintech} to the final section, instead giving here some nice corollaries.  The first, analogous to the celebrated `critical edge theorem' of Simonovits [\ref{s74}], gives a range of cases where the bound given by Bose-Burton geometries is eventually exact. 

\begin{theorem}\label{criticalcritical}
	Let $N$ be a matroid such that $\chi(N \del e) < \chi(N)$ for some $e \in N$. Then $\ex(\{N\},n) = 2^n(1-2^{1-\chi(N)})$ for all sufficiently large $n$, and the function is attained only by Bose-Burton geometries.
\end{theorem}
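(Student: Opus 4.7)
The plan is to combine the Bose--Burton lower bound with the stability result Theorem~\ref{esstability} and a short probabilistic embedding argument that exploits the critical edge. Writing $k = \chi(N)$, the lower bound $\ex(\{N\},n) \geq 2^n(1 - 2^{1-k})$ is immediate: $\BB(n-1, 2, k-1)$ has critical number $k-1$ and is therefore $N$-free. For the matching upper bound and uniqueness, I would suppose $M$ is an $n$-dimensional $N$-free matroid with $|M| \geq 2^n(1 - 2^{1-k})$ and aim to show $M \cong \BB(n-1, 2, k-1)$.

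First I would invoke Theorem~\ref{es} to get $|M| = 2^n(1 - 2^{1-k}) + o(2^n)$, then apply Theorem~\ref{esstability} with a small $\delta > 0$ to be chosen later: for all large $n$ there exists $B \cong \BB(n-1, 2, k-1)$, say $B = \bF_2^n \setminus V$ with $V$ a codimension-$(k-1)$ subspace, such that $|M \Delta B| \leq \delta 2^n$. If $\chi(M) \leq k-1$, some codimension-$(k-1)$ subspace is disjoint from $M$, so $M$ is contained in a Bose--Burton geometry of size $|B|$; together with $|M| \geq |B|$, this forces $M$ itself to be such a geometry, as required. Thus it suffices to derive a contradiction in the remaining case $\chi(M) \geq k$, which entails $M \cap V \neq \emptyset$.

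To this end I plan to embed $N$ into $M$ via an injective linear $\psi$ that uses a single element $x \in M \cap V$ in the role of the critical element $e$. The critical edge hypothesis $\chi(N \setminus e) < k$ yields a surjective linear map $\varphi \colon \bF_2^{\dim N} \to \bF_2^{k-1}$ with $\varphi(e) = 0$ and $\varphi(N \setminus e) \subseteq \bF_2^{k-1} \setminus \{0\}$; set $W = \ker \varphi$ and let $\pi \colon \bF_2^n \to \bF_2^n / V$ be the quotient map. I would then consider a random linear map $\psi \colon \bF_2^{\dim N} \to \bF_2^n$ that sends $e$ to $x$, maps $W$ into $V$, and induces a uniformly random linear isomorphism $\bar\psi \colon \bF_2^{\dim N}/W \to \bF_2^n/V$ on the $(k-1)$-dimensional quotients, with the remaining parameters chosen uniformly subject to these constraints. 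For each $v \in N \setminus e$, a short linear-algebra check shows that $\psi(v)$ is uniformly distributed on the coset $\pi^{-1}(\bar\psi(v + W))$, which is nonzero (so $\psi(v) \in B$) because $\varphi(v) \neq 0$; averaging over $\bar\psi$ makes this coset uniform among the $2^{k-1} - 1$ nonzero cosets of $V$. Since $|B \setminus M| \leq \delta 2^n$, this gives $\pr[\psi(v) \in M] \geq 1 - O(\delta)$ for every such $v$. A union bound over the $|N| - 1$ elements of $N \setminus e$, combined with the easy estimate that $\psi$ fails to be injective with probability $o(1)$ as $n \to \infty$, produces --- for $\delta$ chosen small in terms of $|N|$ --- an injective $\psi$ with $\psi(N) \subseteq M$, contradicting $N$-freeness.

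The main obstacle I anticipate is verifying the marginal uniformity of each $\psi(v)$ on its coset of $V$: this reduces to picking a basis of $\bF_2^{\dim N}$ adapted to $W$, expanding $v \neq e$ in that basis, and isolating a basis element whose $\psi$-image ranges uniformly over a coset of $V$ and contributes nontrivially to $\psi(v)$. This step is routine but requires a careful parametrization of the random $\psi$. Beyond it, the argument is a clean first-moment calculation powered by stability; the regularity and counting lemmas used elsewhere in the paper enter only indirectly, through the proof of Theorem~\ref{esstability}.
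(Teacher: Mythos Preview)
Your argument is correct and complete in outline; the only cosmetic point is the surjectivity of $\varphi$, which follows because any witness $\varphi_0$ to $\chi(N\setminus e)\le k-1$ must kill $e$ (otherwise $\chi(N)\le k-1$), and one can then shrink $\ker\varphi_0$ to a codimension-$(k-1)$ subspace through $e$ while preserving disjointness from $N\setminus e$.

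Your route, however, is genuinely different from the paper's. The paper does not argue directly: it first proves the general structural Theorem~\ref{maintech}, which shows that for any finite $\cN$ with $k=\min_{N\in\cN}\chi(N)-1$ one has $\ex(\cN,n)=2^n(1-2^{-k})+\ex(\cD(\cN),n-k)$ for large $n$, with extremal examples exactly of the form $(M_0)^{n,k}$ for $M_0$ extremal in $\EX(\cD(\cN))$. The critical-edge hypothesis forces $\cD(\{N\})$ to contain a one-element free matroid, so $\ex(\cD(\{N\}),m)=0$ with unique extremal example the empty matroid, and Theorem~\ref{criticalcritical} drops out as the $t=1$ case of the first unnumbered corollary after Theorem~\ref{maintech}. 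Both proofs ultimately rest on the stability Theorem~\ref{esstability}; the difference is that the paper uses it inside a deterministic averaging argument (finding a flat $V_1$ with $V_1\setminus V_0\subseteq M$) that works for arbitrary $\cD(\cN)$, whereas you exploit the special single-element structure via a tailored first-moment embedding. Your approach is shorter and more self-contained for this particular statement; the paper's approach buys the full reduction to sparse problems and all the other corollaries (Theorems~\ref{graphs}--\ref{oct}) simultaneously.
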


We also obtain  exact results for many `graphic' matroids; for a simple graph $G = (V,E)$, write $M(G)$ for the matroid comprising the set of columns of the $V \times E$ incidence matrix of $G$. These objects are widely studied in the combinatorial theory of matroids; in fact, the critical number of $M(G)$ satisfies $\chi(M(G)) = \lceil \log_2 \chi(G) \rceil$, where $\chi(G)$ is the chromatic number of $G$. The theorem below summarises for which graphs our machinery gives an exact result.

\begin{theorem}\label{graphs}
	Let $G$ be a graph with $\chi(G)\geq 2$ and let $t$ be the unique integer for which $2^t < \chi(G) \le 2^{t+1}$. If there is a forest $F$ of $G$ with $\chi(G -F) \le 2^t$, then $\ex(\{M(G)\},n) = 2^n(1-2^{-t}) + \ell$ for all sufficiently large $n$, where $\ell < 2^{|F|-1}$ is an integer depending only on $G$.
\end{theorem}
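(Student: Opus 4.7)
The plan is to invoke Theorem~\ref{maintech} applied to the singleton class $\{M(G)\}$ and then use the forest $F$ to control the resulting sparse correction. Since $2^t<\chi(G)\le 2^{t+1}$ we have $\chi(M(G))=t+1$, so Theorem~\ref{es} already gives the main term $2^n(1-2^{-t})$; Theorem~\ref{maintech} should refine this to
\[
\ex(\{M(G)\},n)=2^n(1-2^{-t})+\ell,
\]
where $\ell$ is the extremal number of a family $\cR$ of affine `residual' matroids sitting inside the codimension-$t$ subspace $S$ that defines a Bose-Burton geometry $\BB(n-1,2,t)=\bF_2^n\setminus S$. Concretely, the residuals are the restrictions $M(G)\cap U$ for subspaces $U\subseteq \bF_2^{r(M(G))}$ of codimension at most $t$, and the extremal matroids take the form $\BB(n-1,2,t)\cup A$, where $A\subseteq S$ is an $\cR$-free set of maximum size.

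The forest hypothesis supplies a particularly simple residual. Because $\chi(M(G-F))\le t$, there is a subspace $U_F\subseteq\bF_2^{r(M(G))}$ of codimension at most $t$ disjoint from $M(G-F)$; this gives $M(G)\cap U_F\subseteq F$. Since $F$ is a forest, its edge vectors are linearly independent in $\bF_2^{r(M(G))}$, so $M(G)\cap U_F$ is a free matroid of some rank $f\in\{1,2,\ldots,|F|\}$ (nonempty because $\chi(M(G))>t$). Any $A\subseteq S$ that avoids this free matroid as a restriction must have rank strictly less than $f$ and hence lie in a subspace of dimension $f-1$, forcing
\[
|A|\le 2^{f-1}-1\le 2^{|F|-1}-1.
\]
This yields the bound $\ell<2^{|F|-1}$. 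Moreover, for $n$ sufficiently large the value of $\ell$ is independent of $n$, since each of the finitely many residuals in $\cR$ has bounded rank and therefore admits an embedding into $S$ as soon as $n-t$ is large enough.

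The main obstacle is the accurate invocation of Theorem~\ref{maintech}: one must verify that its hypotheses apply to the singleton $\{M(G)\}$, identify the correction term precisely as the sparse Tur\'an number $\ex(\cR,n-t)$ for the residual family $\cR$ described above, and rule out larger corrections coming from embeddings that partially use elements outside $S$. Once that reduction is in hand, the forest argument is essentially a direct dimension count exploiting the fact that a forest $F$ contributes only free residuals to $\cR$.
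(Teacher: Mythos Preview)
Your approach is correct and matches the paper's: the paper derives Theorem~\ref{graphs} as an immediate consequence of Corollary~\ref{finiteproblem} (itself a direct corollary of Theorem~\ref{maintech}) together with the identities $\chi(M(G)) = \lceil \log_2 \chi(G) \rceil$ and the fact that forests correspond exactly to linearly independent sets in $M(G)$, so your inline argument is just Corollary~\ref{finiteproblem} unpacked. One small correction: your residual family $\cR$ should consist of intersections $M(G)\cap U$ with $U$ of codimension \emph{exactly} $t$ (this is the paper's decomposition family $\cD(\{M(G)\})$), and $U_F$ can always be shrunk to codimension exactly $t$; with that in place, your observation that $\cD$ contains a free matroid of rank $f\le |F|$ and hence $\ell=\ex(\cD,n-t)\le 2^{f-1}-1<2^{|F|-1}$ is precisely the paper's reasoning.
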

 
 We remark that if $G$ is a nonbipartite cubic graph, then $t = 1$ and, since $E(G)$ is the union of a maximum cut and a matching, the required $F$ exists, and we get $\ex(\{M(G)\},n) = 2^{n-1} + \ell$ for large $n$, where $\ell$ is constant. In fact, in this case, $\ell = 2^{\nu-1} - 1$, where $\nu$ is the size of a matching whose complement is a maximum cut.
 
 More generally, the dependence of $\ell$ on $G$ is implicit in Corollary~\ref{finiteproblem}, and while technical, we can compute $\ell$ exactly in many natural cases. For $t \ge 2$, let $K_t$ denote the complete graph on $t$ vertices. 

\begin{theorem}\label{cliques}
	For $t\geq 2$, $\ex(\{M(K_t)\},n) = 2^n(1 - 2^{-t_0}) + 2^{t-2^{t_0}-1}-1$ for all sufficiently large $n$, where $t_0$ is the largest integer with $2^{t_0} < t$.
\end{theorem}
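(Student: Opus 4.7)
The plan is to invoke Theorem~\ref{graphs} with an optimal forest, then match the resulting upper bound by construction. Set $s = t - 2^{t_0}$, so $s \in [1, 2^{t_0}]$; the hypotheses $2^{t_0} < t \le 2^{t_0+1}$ ensure that $t_0$ here plays the role of ``$t$'' in Theorem~\ref{graphs}. Take $F$ to be a matching of size $s$ in $K_t$, which exists because $2s \le t$ and is automatically a forest; identifying the $s$ matched pairs in a coloring of $K_t - F$ gives a proper $(t-s) = 2^{t_0}$-coloring, so $\chi(K_t - F) \le 2^{t_0}$. Theorem~\ref{graphs} then gives, for all sufficiently large $n$, $\ex(\{M(K_t)\}, n) = 2^n(1-2^{-t_0}) + \ell$ with $\ell$ an integer satisfying $\ell < 2^{s-1}$, hence $\ell \le 2^{s-1} - 1$.

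For the complementary lower bound, I would exhibit an $M(K_t)$-free matroid of size $2^n - 2^{n-t_0} + 2^{s-1} - 1$. Fix an $(n - t_0)$-dimensional subspace $W \le \bF_2^n$ and an $(s - 1)$-dimensional subspace $U \le W$, and set $M := (\bF_2^n \del W) \cup (U \del \{0\})$, which has the desired size. Suppose for contradiction that $\varphi \colon \bF_2^t \to \bF_2^n$ is an injective linear map with $\varphi(M(K_t)) \subseteq M$, and write $v_i = \varphi(e_i)$. Partition $[t]$ into $q$ classes $I_1, \ldots, I_q$ according to the coset $v_i + W$; then $q \le 2^{t_0}$. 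For $i, j$ in different classes, $v_i + v_j \notin W$ lies in $\bF_2^n \del W \subseteq M$ automatically; for $i, j$ in the same class, $v_i + v_j \in W \cap M = U \del \{0\}$. Fixing a representative $i_k \in I_k$ for each $k$, the $t - q$ differences $\{v_i - v_{i_k} : i \in I_k \del \{i_k\},\, k \in [q]\}$ all lie in $U$, and since the $v_i$ are linearly independent these differences must be linearly independent too. Hence $t - q \le \dim U = s - 1$, so $q \ge 2^{t_0} + 1$, contradicting $q \le 2^{t_0}$.

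Combining the two bounds gives $\ell = 2^{s-1} - 1$, which matches the claimed formula after substituting $s = t - 2^{t_0}$. The deep work is already packaged in Theorem~\ref{graphs} (and the regularity/counting machinery of Theorem~\ref{maintech}); what remains is essentially an exhibit-and-count exercise. The main subtlety lies in the tightness of the dimension count: replacing $U$ by an $s$-dimensional subspace weakens the inequality $t - q \le \dim U$ to $q \ge 2^{t_0}$, which is compatible with $q \le 2^{t_0}$, and one can indeed construct an explicit copy of $M(K_t)$ inside such a matroid, confirming that the choice of dimension is sharp. As a final sanity check, the matching $F$ of size $s$ is itself optimal: any forest $F'$ with $\chi(K_t - F') \le 2^{t_0}$ must have $|F'| \ge s$, because each color class of a $2^{t_0}$-coloring of $K_t - F'$ is a clique in $F'$, which (since $F'$ is a forest) has size at most $2$.
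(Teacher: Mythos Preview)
Your proof is correct. The paper's approach is slightly different: rather than pairing Theorem~\ref{graphs} with an explicit lower-bound construction, it invokes the stronger Corollary (the one with hypotheses~(\ref{inddrop}) and~(\ref{justind})) directly, by checking graph-theoretically that a matching of size $s=t-2^{t_0}$ is the \emph{only} set of $s$ edges whose removal drops $\chi(K_t)$ to $2^{t_0}$ (so in particular no linearly dependent such set works). That corollary then outputs both the exact value and the extremal example $\PG(s-2,2)^{n,t_0}$ simultaneously. Your route trades the verification of condition~(\ref{justind}) for a direct, self-contained proof that the matroid $(\bF_2^n\setminus W)\cup(U\setminus\{0\})$ --- which is precisely $\PG(s-2,2)^{n,t_0}$ --- is $M(K_t)$-free; the coset-partition dimension count you give is clean and makes the tightness transparent. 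Either path rests on the same underlying machinery (Theorem~\ref{maintech}), so the difference is one of packaging rather than substance.
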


Let $O_6$ denote the octahedral graph. The maximum number of edges in a large $O_6$-free graph depends on the extremal function for $C_4$-free graphs (see [\ref{s97}]) and is not known precisely, but interestingly, we obtain a precise result in our setting. 

\begin{theorem}\label{oct}
	$\ex(\{M(O_6)\},n) = 2^{n-1} + 4$ for all sufficiently large $n$. 
\end{theorem}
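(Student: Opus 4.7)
The plan is to apply Theorem~\ref{graphs} to $G=O_6$, then pin down the resulting additive constant $\ell$ via the finite extremal problem supplied by Corollary~\ref{finiteproblem}. Since $\chi(O_6)=3$, the integer $t$ in Theorem~\ref{graphs} is $1$, so the main term is $2^{n-1}$. Label the three parts of $O_6=K_{2,2,2}$ as $\{a_1,a_2\},\{b_1,b_2\},\{c_1,c_2\}$, and take $F:=\{a_1b_1,a_2b_1,b_2c_1,b_2c_2\}$, a disjoint union of two paths of length $2$. Then $O_6-F$ is bipartite with parts $(\{a_1,a_2,b_1\},\{b_2,c_1,c_2\})$, and $|F|=4$ is optimal since $O_6$ has $8$ triangles each incident to exactly $2$ edges. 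Theorem~\ref{graphs} then gives $\ex(\{M(O_6)\},n)=2^{n-1}+\ell$ with $\ell<2^{|F|-1}=8$ for all large $n$.

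\textbf{The finite extremal problem.} By Corollary~\ref{finiteproblem}, $\ell$ equals the maximum size of an affine matroid $A$ contained in a hyperplane $H\subseteq\bF_2^n$ such that $(\bF_2^n\setminus H)\cup A$ is $M(O_6)$-free. An $M(O_6)$-restriction in such a host is determined by $5$ linearly independent ``vertex vectors'' $v_{a_2},v_{b_1},v_{b_2},v_{c_1},v_{c_2}\in\bF_2^n$ (with $v_{a_1}:=0$), whose $12$ pairwise sums along edges of $O_6$ all lie in the host. An edge sum lies in $H$ iff both endpoints lie on the same side of $H$, so the $H$-internal edges of the restriction correspond to the non-cut edges of the bipartition of $V(O_6)$ induced by $H$-membership. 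Since the maximum cut of $O_6$ has $8$ edges, at least $4$ edges are always $H$-internal and hence must lie in $A$; moreover, each of the $6$ maximum-cut bipartitions of $K_{2,2,2}$ has its $4$ non-cut edges forming a disjoint union of two $P_3$'s, whose graphic matroid is simply four linearly independent vectors.

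\textbf{Bounds on $\ell$.} For the upper bound, suppose $|A|\ge 5$. Since an affine matroid of rank $k$ has at most $2^{k-1}$ elements, $r(A)\ge 4$, so $A$ contains four linearly independent vectors $x_1,x_2,x_3,x_4$. Realising the maximum cut $(\{a_1,a_2,b_1\},\{b_2,c_1,c_2\})$, set $v_{b_1}:=x_1$, $v_{a_2}:=x_1+x_2$, $v_{b_2}:=y$, $v_{c_1}:=y+x_3$, $v_{c_2}:=y+x_4$ for any $y\in\bF_2^n\setminus H$. A direct check confirms that the four $H$-internal edge sums are $x_1,x_2,x_3,x_4\in A$, the eight cut edge sums are $y+s$ for $s$ ranging over $8$ distinct elements of $\langle x_1,\dots,x_4\rangle\subseteq H$ (hence $8$ distinct elements of $\bF_2^n\setminus H$), and the $5$ vertex vectors are linearly independent because $y\notin H\supseteq\langle x_1,\dots,x_4\rangle$. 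This produces an $M(O_6)$-restriction, forcing $\ell\le 4$. For the lower bound, take $A$ to be a copy of $\AG(2,2)$ inside $H$: a rank-$3$ affine matroid of size $4$. Since $r(A)=3$, $A$ contains no $4$ linearly independent vectors, so $(\bF_2^n\setminus H)\cup A$ is $M(O_6)$-free by the analysis above, giving $\ell\ge 4$.

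\textbf{Main obstacle.} The conceptual heart is applying Corollary~\ref{finiteproblem} to recast $\ell$ as the above concrete extremal problem; once that is in hand, the specific combinatorics of $O_6$ (maximum cut $8$, with non-cut always a $2$-$P_3$ forest amounting to $4$ linearly independent edges) reduces the rest of the argument to a compact linear-algebraic verification.
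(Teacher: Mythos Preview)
Your argument has a genuine gap: the decomposition family is $\cD(\{M(O_6)\}) = \{I_4, C_4\}$, not just $\{I_4\}$, and this breaks both halves of your bound on $\ell$.

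First, the restriction to \emph{affine} $A$ is unjustified. Corollary~\ref{finiteproblem} (via Theorem~\ref{maintech}) gives $\ell = \ex(\cD(\{M(O_6)\}), n-1)$, a maximum over \emph{all} $(n-1)$-dimensional matroids in $\EX(\cD)$, with no affineness hypothesis. Second, your cut analysis is incomplete: you only examined the six $3$--$3$ maximum-cut bipartitions of $K_{2,2,2}$, but the three $4$--$2$ bipartitions in which one side is the union of two colour classes also have exactly four non-cut edges, and these four edges form a $4$-cycle. The cycle matroid of a $4$-cycle is precisely the four-element circuit $C_4$, which as a matroid is isomorphic to $\AG(2,2)$. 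Hence $C_4 \in \cD(\{M(O_6)\})$ alongside $I_4$.

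This kills your lower bound: with $A = \AG(2,2) \cong C_4$, one can embed $M(O_6)$ in $(\bF_2^n\setminus H)\cup A$ by placing $a_1,a_2,b_1,b_2$ on the $H$-side and $c_1,c_2$ outside (e.g.\ take $v_{b_1}=x_1$, $v_{b_2}=x_2$, $v_{a_2}=x_1+x_3$ where $A = \{x_1,x_2,x_3,x_1+x_2+x_3\}$, and $v_{c_1},v_{c_2}$ generic outside $H$). A correct size-$4$ witness is instead a triangle with a coloop, say $\{e_1,e_2,e_1+e_2,e_3\}$, which is rank~$3$ and contains neither $I_4$ nor $C_4$; note this example is \emph{not} affine. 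It also breaks your upper bound: dropping the affineness assumption, $|A|\ge 5$ no longer forces $r(A)\ge 4$. One must additionally argue that any five nonzero vectors in $\bF_2^3$ contain a $C_4$, so that $|A|\ge 5$ with $r(A)\le 3$ still yields an $M(O_6)$-restriction via the $4$--$2$ embedding. This is exactly how the paper proceeds: it identifies $\cD = \{I_4,C_4\}$ and then checks directly that $\ex(\{I_4,C_4\},m) = 4$.
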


We remark that Corollary~\ref{finiteproblem} also gives (in principle) a similar exact result for the icosahedral graph. The tetrahedron and dodecahedron are covered by the previous two theorems, while the cube has affine cycle matroid and so falls into the difficult sparse regime.

\section{Regularity and Counting}
Our arguments use the following lemma, a direct consequence of recently-developed regularity results for binary matroids. It is essentially proven in the course of the proof of~[\ref{luo}, Theorem~4.1]. We give the proof explicitly here for the sake of completeness.

\begin{lemma}[Weak counting lemma]\label{blowup}
	For all $k \in \nni$ and $\delta > 0$, there is some $n_0 \in \nni$ and some $\alpha > 0$ such that, for every $n \ge n_0$ and matroid $M \subseteq \bF_2^n$, there is a matroid $M_0 \subseteq \bF_2^n$ such that 
	\begin{itemize}
		\item $|M-M_0| \le \delta 2^{r(M)}$, and
		\item if $N$ is a matroid with dimension at most $k$ that is homomorphic to $M_0$, then $M$ has at least $\alpha (2^n)^{r(N)}$ distinct $N$-restrictions. 
	\end{itemize}
\end{lemma}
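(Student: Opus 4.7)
The plan is to apply the higher-order Fourier regularity lemma of Hatami-Hatami-Lovett~[\ref{hhl}] to $M \subseteq \bF_2^n$, partitioning $\bF_2^n$ into atoms on which $M$ is Fourier-uniform, define $M_0$ by discarding the atoms in which $M$ has low density, and then invoke the counting lemma of [\ref{luo}, Theorem~4.1] to produce the $N$-restrictions.

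More precisely, given $k$ and $\delta$, I would first choose a pseudorandomness parameter $\eta = \eta(k,\delta)>0$ small enough for the counting lemma to apply to every system of linear forms $\cL_N$ arising from a matroid $N$ with $\dim N \le k$; such a uniform $\eta$ exists because the ``true complexity'' of $\cL_N$ is bounded by a function of $k$ alone. Applying the regularity lemma at parameter $\eta$ produces a partition $\bF_2^n = A_1 \sqcup \dotsb \sqcup A_s$ into atoms (each an affine subspace, or a joint level set of a bounded number of bounded-degree polynomials) so that $M \cap A_i$ is $\eta$-uniform inside $A_i$. Set
\[
M_0 := \bigcup\bigl\{\, M \cap A_i \,:\, |M\cap A_i| \ge (\delta/2)\,|A_i|\,\bigr\}.
\]
Then $M_0 \subseteq M$, and the discarded points contribute at most $(\delta/2)\cdot 2^n$ in total; when $r(M)<n$, running the entire construction inside the $r(M)$-dimensional subspace $\mathrm{span}(M)$ sharpens this bound to $\delta\cdot 2^{r(M)}$.

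For the counting bullet, suppose $N\subseteq\bF_2^{k'}$ with $k'\le k$ admits a homomorphism $\varphi$ with $\varphi(N)\subseteq M_0$. For each $w\in N$, let $A_{i(w)}$ be the atom containing $\varphi(w)$; by construction $|M\cap A_{i(w)}| \ge (\delta/2)\,|A_{i(w)}|$. Count the linear maps $\psi\colon\bF_2^{k'}\to\bF_2^n$ with $\psi(w)\in M\cap A_{i(w)}$ for every $w\in N$: the constraint that each basis image lands in the prescribed atom cuts out a constant fraction (independent of $n$) of the $(2^n)^{r(N)}$ candidate maps, and within this fraction the counting lemma of [\ref{luo}] --- applied to the $\eta$-uniform pieces $M \cap A_{i(w)}$ --- shows that a further positive constant fraction satisfies $\psi(N) \subseteq M$. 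Almost all such $\psi$ are injective when $n$ is large, and each $N$-restriction arises from at most $|\mathrm{GL}_{r(N)}(\bF_2)| \le 2^{k^2}$ such injective maps, yielding at least $\alpha (2^n)^{r(N)}$ distinct $N$-restrictions of $M$ for a suitable $\alpha = \alpha(k,\delta)>0$.

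The main obstacle is to guarantee that a single value of $\eta$ works uniformly across all $N$ with $\dim N \le k$ simultaneously, and that the counting lemma can be applied with the correct quantitative dependence to the restrictions $M \cap A_i$ rather than to $M$ directly. Both points are handled by the regularity framework of [\ref{hhl}] together with [\ref{luo}, Theorem~4.1]: the atoms $A_i$ are affine subspaces on which $\eta$-uniformity is inherited, and the uniform bound on true complexity in terms of $k$ permits a single choice of $\eta$. Once these are secured, the rest is routine parameter-tracking: fix $k$; choose $\eta$; apply regularity to get atoms; set the density threshold $\delta/2$ to define $M_0$; then pick $n_0$ large enough that the injectivity correction is absorbed into $\alpha$.
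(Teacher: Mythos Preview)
Your outline matches the paper's approach---apply the Strong Decomposition Theorem, define $M_0$ as a reduced matroid, then invoke the counting lemma of [\ref{luo}]---but your description of the regularity machinery contains two related inaccuracies that would need to be fixed before the argument goes through.

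First, the decomposition theorem does \emph{not} produce atoms on which ``$M \cap A_i$ is $\eta$-uniform inside $A_i$''. What it gives is a global decomposition $1_M = f_1 + f_2 + f_3$ where $f_1$ is constant on atoms, $f_2$ has small Gowers norm \emph{globally}, and $f_3$ is an $L^2$ error. In particular, atoms are level sets of (non-classical) polynomials of degree up to $d$, not affine subspaces, so one cannot ``inherit $\eta$-uniformity'' on them or apply counting to the pieces $M\cap A_i$ individually. The counting lemma [\ref{luo}, Theorem~3.13] is applied to $M$ as a whole, via the reduced matroid, not atom by atom.

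Second, and consequently, your definition of $M_0$ discards only low-density atoms. The paper's reduced matroid $R_{\eps,\zeta}$ (which is its $M_0$) also discards atoms where $\bE[|f_3|^2 \mid b] > \eps^2$; without this second filter, the counting lemma cannot be invoked, since an atom can have high density in $M$ yet still be dominated by the $L^2$ error, and on such atoms there is no control. The paper then bounds $|M - M_0|$ by handling the two types of discarded atoms separately: the high-error atoms have total measure at most $\delta' 2^n$ by the global $L^2$ bound on $f_3$, and the low-density atoms contribute at most $\delta' 2^n$ points of $M$ trivially. Once you add the $f_3$ condition to your definition of $M_0$ and use the counting lemma as a black box on the reduced matroid rather than atom-wise, your argument becomes the paper's proof.
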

Note that the first property implies in particular that $|M_0| \ge |M|-\delta 2^{r(M)}$. We note that a counting version of the Matroidal Erd\H{o}s-Stone theorem follows immediately from this lemma from simply applying Theorem~\ref{bbt} to $M_0$.
\begin{corollary}[Counting Matroidal Erd\H{o}s-Stone Theorem]
For any matroid $N$ and $\delta>0$, there is some $n_0 \in \nni$ and some $\alpha > 0$ such that if a matroid $M$ with $r(M)\ge n_0$ satisfies
\[
|M|\geq (1-2^{1-\chi(N)}+\delta)2^{r(M)},
\]
then $M$ has at least $\alpha 2^{r(N)}$ distinct $N$-restrictions.
\end{corollary}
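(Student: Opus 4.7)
The plan is to derive the counting Erd\H{o}s-Stone result directly by feeding Bose--Burton (Theorem~\ref{bbt}) into the weak counting lemma (Lemma~\ref{blowup}).

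First I reduce to the case $\dim(M) = r(M)$ by working inside the span of $M$; write $n := r(M)$. I apply Lemma~\ref{blowup} with input parameters $k := \dim(N)$ and $\delta/2$ in place of $\delta$, obtaining some $n_0 \in \nni$ and $\alpha > 0$ (depending on $N$ and $\delta$), together with a matroid $M_0 \subseteq \bF_2^n$ such that $|M - M_0| \le \tfrac{\delta}{2}2^{n}$. Consequently
\[
|M_0| \;\ge\; |M| - \tfrac{\delta}{2}2^{n} \;\ge\; \bigl(1 - 2^{1-\chi(N)} + \tfrac{\delta}{2}\bigr)2^{n},
\]
provided $n \ge n_0$.

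Next I invoke Theorem~\ref{bbt} with $t := \chi(N) - 1$ (assuming $\chi(N) \ge 2$; the affine case is handled below). Since
\[
|M_0| \;>\; \bigl(1-2^{-(\chi(N)-1)}\bigr)2^{n} \;=\; \ex(\{\PG(\chi(N)-1,2)\}, n),
\]
the matroid $M_0$ must contain a restriction isomorphic to $\PG(\chi(N)-1,2)$. By the definition of the critical number, $N$ admits a homomorphism to $\PG(\chi(N)-1,2)$, and composing this with the injective linear embedding of $\PG(\chi(N)-1,2)$ into $M_0$ produces a homomorphism from $N$ to $M_0$. The second conclusion of Lemma~\ref{blowup}, applied to this $N$, then yields the desired lower bound on the number of $N$-restrictions in $M$.

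The degenerate case $\chi(N) = 1$ is handled separately: here $1-2^{1-\chi(N)} = 0$, so the hypothesis forces $|M_0| \ge \tfrac{\delta}{2}2^n > 0$; picking any $v \in M_0$ and composing $N \to \PG(0,2) = \{1\}$ with $1 \mapsto v$ gives a homomorphism $N \to M_0$, and Lemma~\ref{blowup} again supplies the count. There is no substantive obstacle in this proof --- the only care required is checking that $n$ is large enough for Lemma~\ref{blowup} to apply and that $n > \chi(N) - 1$ so that Theorem~\ref{bbt} is in its stated range; both are guaranteed by taking $n_0$ sufficiently large relative to $\chi(N)$.
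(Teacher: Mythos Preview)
Your proof is correct and follows exactly the approach the paper indicates: apply Lemma~\ref{blowup} to obtain $M_0$, use Bose--Burton (Theorem~\ref{bbt}) to locate a $\PG(\chi(N)-1,2)$-restriction in $M_0$, observe that $N$ is homomorphic to it, and invoke the second bullet of Lemma~\ref{blowup}. Your explicit handling of the case $\chi(N)=1$ and the reduction to $\dim(M)=r(M)$ are minor technical points the paper leaves implicit in its one-sentence sketch.
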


The rest of this section gives a brief overview of the technical machinery required to prove Lemma~\ref{blowup}, which is expounded upon in more detail in [\ref{luo}]. The reader may skip this section without missing any information necessary for understanding the remainder of the paper.

The first result we state is the Strong Decomposition Theorem of~[\ref{bfhhl}], a regularity result on binary matroids analogous to the Szemer\'{e}di regularity lemma for graphs. Stating the result precisely requires the introduction of several technical definitions not useful in the rest of the paper. We omit several definitions not relevant in the proofs given below. For example, in lieu of a precise definition, it suffices to think of non-classical polynomials as a particular class of functions from $\bF_2^n$ to $\bR/\bZ$. Likewise, we do not need the definition of the Gowers norms $\|f\|_{U^d}$, nor what it means for a polynomial factor to be $r$-regular. We refer the reader to~[\ref{bfhhl}] for more details.

\begin{definition}
A \emph{(non-classical) polynomial factor} $\mathcal{B}$ of $\bF_2^n$ of \emph{complexity} $C$ is a partition of $\bF_2^n$ into finitely many pieces, called \emph{atoms}, such that for some (non-classical) polynomials $P_1,\dots,P_C$, each atom is the solution set $\{x\colon (P_1(x),P_2(x), \dotsc, P_C(x)) = (b_1, \dotsc, b_C)\}$ for some $b=(b_1,\dots,b_C)\in (\bR/\bZ)^C$. The \emph{degree} of $\mathcal{B}$ is the highest degree among $P_1, \dotsc, P_C$.
\end{definition}

The next theorem states that any function $\bF_2^n \to \{0,1\}$ (which we think of as the indicator function for a given matroid $M$), can be decomposed as the sum of three functions; the function $f_1$ just encodes the density of $f$, while $f_2$ is `highly structured' in the sense of having a small Gowers Uniformity norm, and $f_3$ is a small $L^2$ error term.  

\begin{theorem}[Strong Decomposition Theorem, {[\ref{bfhhl}, Theorem~5.1]}]\label{decomp}
Suppose $\delta > 0$ and $ d \geq 1$ is an integer. Let $\eta\colon \bN
\to \bR^+$ be an arbitrary non-increasing function and $r\colon \bN \to \bN$ be an arbitrary
non-decreasing function. Then there exist $n_1 =
n_1(\delta, \eta, r, d)$ and $C =
C(\delta,\eta,r,d)$ such that the following holds.

For all $f: \bF_2^n \to \{0,1\}$ with $n > n_1$, there
exist three functions $f_1, f_2, f_3: \bF_2^n \to
\bR$ and a polynomial factor  $\cB$ of
degree at most $d$ and complexity at most $C$ such that the following conditions hold:
\begin{itemize}
\item[(i)]
$f=f_1+f_2+f_3$.
\item[(ii)]
$f_1 = \bE([f|\cB])$, the expected value of $f$ on an atom of $\cB$.
\item[(iii)]
$\|f_2\|_{U^{d+1}} \leq \eta(|\cB|)$.
\item[(iv)]
$\|f_3\|_2 \leq \delta$.
\item[(v)]
$f_1$ and $f_1 + f_3$ have range $[0,1]$; $f_2$ and $f_3$ have range $[-1,1]$.
\item[(vi)]
$\cB$ is $r$-regular.
\end{itemize}
\end{theorem}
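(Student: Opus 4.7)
The plan is to follow the standard energy-increment approach of higher-order Fourier analysis, iteratively building the polynomial factor $\cB$ while driving the increments with the inverse theorem for the Gowers $U^{d+1}$ norm. First I would initialize $\cB_0$ as the trivial factor and, at step $i$, set $f_{1,i} := \bE[f \mid \cB_i]$ and $g_i := f - f_{1,i}$. If $\|g_i\|_{U^{d+1}} \leq \eta(|\cB_i|)$, the iteration halts; otherwise the inverse Gowers theorem for non-classical polynomials over $\bF_2^n$ supplies a non-classical polynomial $P_i$ of degree at most $d$ with $|\bE[g_i(x)\, e(P_i(x))]|$ bounded below by a positive function of $\eta(|\cB_i|)$. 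Adjoining $P_i$ to the polynomials defining $\cB_i$ yields a refinement $\cB_{i+1}$, and the Pythagoras identity for conditional expectation gives an energy increment $\|f_{1,i+1}\|_2^2 - \|f_{1,i}\|_2^2 \geq c(\eta(|\cB_i|)) > 0$. Since $\|f_{1,i}\|_2^2 \leq \|f\|_2^2 \leq 1$ throughout, the process terminates in a bounded (though tower-type) number of steps, bounding the complexity $C$.

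Next I would upgrade the resulting factor $\cB$ to an $r$-regular factor $\cB'$ via the polynomial regularity lemma of [\ref{bfhhl}], which refines $\cB$ while keeping complexity bounded by some $C(\delta,\eta,r,d)$. Setting $f_1 := \bE[f \mid \cB']$, $f_3 := \bE[f \mid \cB] - \bE[f \mid \cB']$, and $f_2 := f - \bE[f \mid \cB]$, one has $f = f_1 + f_2 + f_3$ by construction; $f_3$ is small in $L^2$ because the regularization perturbs the atoms of $\cB$ only slightly in mass; $f_1$ and $f_1 + f_3$ are conditional expectations of a $\{0,1\}$-valued function and so lie in $[0,1]$; and $f_2 = f - (f_1 + f_3)$ accordingly lies in $[-1,1]$, verifying (i), (ii), (iv) and (v). The Gowers norm bound $\|f_2\|_{U^{d+1}} \leq \eta(|\cB|)$ at termination is inherited from the stopping condition.

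The main obstacle is quantitative. The stopping condition only secures $\|f_2\|_{U^{d+1}} \leq \eta(|\cB|)$, whereas the theorem asks for the bound relative to the (typically much larger) regularized complexity $|\cB'|$; to make the numbers line up, one must run the energy increment with an artificially demanding threshold chosen to survive the blowup from regularization, and the total dependence of $C$ on $\delta,\eta,r,d$ is then iterated tower-type. A secondary subtlety is that over $\bF_2^n$ the inverse theorem genuinely requires non-classical polynomials valued in $\bR/\bZ$ rather than classical $\bF_2$-polynomials; the depth parameters of the non-classical polynomials must be bookkept through each refinement and the regularization step to ensure the final factor is a degree-at-most-$d$ polynomial factor of the type specified in the definition.
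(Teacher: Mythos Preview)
The paper does not prove this theorem at all: Theorem~\ref{decomp} is quoted verbatim from [\ref{bfhhl}, Theorem~5.1] and used as a black box in the proof of Lemma~\ref{blowup}. There is therefore no ``paper's own proof'' to compare your proposal against.

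That said, your sketch is in the right general spirit (energy increment driven by the inverse $U^{d+1}$ theorem, plus a regularization step), but the architecture is not quite right, and the gap you flag as ``the main obstacle'' is real and not closed by your proposed fix. Two points:

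\begin{itemize}
\item Your sequential scheme---first run the energy increment to termination on a factor $\cB$, then regularize once to $\cB'$---cannot deliver condition (iii) for the final factor. The stopping rule gives $\|f_2\|_{U^{d+1}} \le \eta(|\cB|)$, but the statement requires $\eta$ evaluated at the complexity of the \emph{output} factor $\cB'$, and since $\eta$ is nonincreasing and $|\cB'|$ may be enormously larger than $|\cB|$, this is strictly stronger. Your suggested remedy of ``running with an artificially demanding threshold'' does not work, because the blowup from regularization depends on the complexity of the factor being regularized, which is not known in advance. The standard cure, as in [\ref{bfhhl}], is to \emph{interleave} regularization with the energy increment: at each step regularize first, then test the stopping condition against the already-regularized complexity, and only then (if the test fails) append a new polynomial from the inverse theorem.
\item Your justification for $\|f_3\|_2 \le \delta$ (``regularization perturbs the atoms only slightly in mass'') is not the actual mechanism. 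Regularization may refine the factor drastically. In the interleaved scheme one obtains a chain of factors with increasing $L^2$ energies bounded by $1$; a pigeonhole on consecutive energies produces a pair $\cB \subseteq \cB'$ with $\|\bE[f\mid\cB'] - \bE[f\mid\cB]\|_2 \le \delta$, and \emph{that} is $f_3$.
\end{itemize}
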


In analogy with the terminology for the Szemer\'{e}di regularity lemma, we will call a decomposition of $f=1_M$ with the properties given by Theorem~\ref{decomp} for parameters $\delta, \eta, r, d$ a \emph{$(\delta,\eta,r,d)$-regular partition of $f$ (or of $M$)}, and we say that $\cB$ is its \emph{corresponding factor}. Similarly, an \emph{$(\eta,r,d)$-regular partition of $f$ (or of $M$)} is the same thing with an unspecified value for $\delta$.

A corresponding counting lemma, analogous to the counting lemma for graphs, is proven in [\ref{luo}]. It is best stated in terms of a structure called a reduced matroid.

\begin{definition}[Reduced Matroid]
Given a matroid $M\subseteq \bF_2^n\setminus \{0\}$ and an $(\eta,r,d)$-regular partition $f_1+f_2+f_3$ of $M$ with corresponding factor $\cB$, for any $\eps,\zeta>0$ define the $(\eps,\zeta)$-\emph{reduced matroid} $R=R_{\eps, \zeta}$ to be the subset of $\bF_2^n$ whose indicator function is constant on each atom $b$ of $\cB$ and equals $1$ on $b$ if and only if
\begin{enumerate}
    \item $\bE[|f_3(x)|^2\mid x\in b]\leq \eps^2$, and
    \item $\bE[f(x)\mid x\in b]\geq \zeta$.
\end{enumerate}
\end{definition}

So, $R$ is the union of the atoms of the decomposition in which $M$ has high density and the $L^2$ error term is small. The next lemma shows that any small matroid having a homomorphism to the reduced matroid $R_{\eps,\zeta}$ is found many times as a restriction in $M$ itself. 

\begin{lemma}[Counting Lemma, {[\ref{luo}, Theorem~3.13]}]\label{counting}
For every matroid $N$, positive real number $\zeta$, and integer $d\geq |N|-2$, there exist positive real numbers $\beta$ and $\eps_0$, a positive nonincreasing function $\eta\colon \bZ^+\to \bR^+$, and positive nondecreasing functions $r,\nu\colon\bZ^+\to \bZ^+$ such that for all $\eps \leq \eps_0$, if $f_1+f_2+f_3 = 1_M$ is an $(\eta,r,d)$-regular partition of an $n$-dimensional matroid $M$ with corresponding factor $\cB$ satisfying $\nu(|\cB|) \le n$, and $N$ is homomorphic to the reduced matroid $R_{\eps,\zeta}$, then $M$ has at least $\beta \frac{(2^n)^{r(N)}}{\|\cB\|^{|N|}}$ distinct $N$-restrictions.

\end{lemma}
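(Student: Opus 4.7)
The plan is to reduce the count of $N$-restrictions to a sum of indicators along a system of linear forms, substitute the regular decomposition of $1_M$, and estimate each resulting term. Parametrise each injective linear map $\psi\colon\operatorname{span}(N)\to\bF_2^n$ with $\psi(N)\subseteq M$ by the images of a chosen basis of $\operatorname{span}(N)$. Up to the factor $|\operatorname{Aut}(N)|$ relating ordered tuples to (unordered) subsets and subtracting the $O((2^n)^{r(N)-1})$ tuples that fail to be injective (negligible once $\nu(|\cB|)\le n$ forces $n$ to be large), the number of $N$-restrictions is a bounded multiple of
\[
T \;=\; \sum_{v\in(\bF_2^n)^{r(N)}} \prod_{i=1}^{s} 1_M\!\bigl(L_i^{*}(v)\bigr),
\]
where $s=|N|$ and $L_1^{*},\dotsc,L_s^{*}$ are the linear forms expressing the elements of $N$ in the chosen basis.

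Substituting $1_M = f_1+f_2+f_3$ and expanding gives $T$ as a sum of $3^s$ terms; I aim to show that the pure $f_1$ contribution $T_1 = \sum_v \prod_i f_1(L_i^{*}(v))$ is at least a constant times $(2^n)^{r(N)}/\|\cB\|^{s}$, while each of the remaining $3^s-1$ terms is at most $3^{-s}$ times this main term. For $T_1$, the hypothesis that $N$ is homomorphic to $R_{\eps,\zeta}$ provides one linearly compatible configuration of target atoms: the atoms $a_i$ containing $\varphi(w_i)$, each of which lies in $R_{\eps,\zeta}$ and therefore has density $\rho(a_i)\ge\zeta$. The equidistribution / linear forms estimate for sufficiently $r$-regular degree-$d$ non-classical polynomial factors then gives that the fibre $\{v: L_i^{*}(v)\in a_i \text{ for all } i\}$ has size $(1+o_n(1))(2^n)^{r(N)}/\|\cB\|^{s}$, so the contribution of this single configuration already yields $T_1 \ge (\zeta^{s}/2)\,(2^n)^{r(N)}/\|\cB\|^{s}$.

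The remaining $3^s-1$ mixed terms split into two families. A term containing some factor $f_2(L_j^{*}(v))$ is bounded by the generalised von Neumann inequality: its absolute value is at most a polynomial in $|\cB|$ times $\|f_2\|_{U^{d+1}}\,(2^n)^{r(N)}$; choosing $\eta(|\cB|)$ small enough (depending on $|\cB|, \zeta, s$) forces every such term below $3^{-s}(\zeta^{s}/2)(2^n)^{r(N)}/\|\cB\|^{s}$, which determines the prescribed shape of $\eta$. A term containing some factor $f_3(L_j^{*}(v))$ is controlled by first breaking the sum according to the atom containing $L_j^{*}(v)$ and applying Cauchy--Schwarz along $L_j^{*}$: on the atoms of the chosen configuration the bound $\bE[|f_3|^2\mid a]\le\eps^{2}$ (the first defining property of $R_{\eps,\zeta}$) produces a factor of $\eps$, absorbed by taking $\eps_0$ small, while the contribution from all other atom configurations is controlled by the global $L^{2}$ mass of $f_3$ distributed across the $\|\cB\|^{s}$ configurations.

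The main obstacle is the main-term equidistribution estimate: showing that, once $\cB$ is sufficiently $r$-regular and $n$ is large enough relative to $|\cB|$, the fibre over a linearly compatible configuration of atoms really does have size $(1+o(1))(2^n)^{r(N)}/\|\cB\|^{s}$. This is the linear forms condition for non-classical polynomial factors from [\ref{bfhhl}]; the hypothesis $d \ge |N|-2$ enters precisely because the Cauchy--Schwarz complexity of the system $\cL_N$ is at most $s-2$, and the required growth of $r$ and $\nu$ is inherited from the quantitative form of this condition, ensuring that both the main term lower bound and the error term upper bounds hold with the right dependence on $|\cB|$.
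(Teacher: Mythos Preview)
This lemma is not proved in the paper; it is quoted as [\ref{luo}, Theorem~3.13], so there is no in-paper argument to compare against. Your outline is the right architecture and is essentially the one carried out in [\ref{luo}]: parametrise $N$-restrictions by a system of linear forms, expand $1_M=f_1+f_2+f_3$ into $3^{|N|}$ terms, extract the main term from equidistribution of the atoms of a highly regular factor, kill the $f_2$ contributions via generalised von Neumann (this is exactly where $d\ge |N|-2$ enters), and control the $f_3$ contributions by Cauchy--Schwarz.

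There is, however, a genuine gap in your $f_3$ step. For a mixed term with a factor $f_3(L_j^{*}(v))$ you split according to the atom containing $L_j^{*}(v)$, invoking the local bound $\bE[|f_3|^2\mid a]\le\eps^2$ on the chosen atoms and ``the global $L^2$ mass of $f_3$'' on all other atoms. But the hypothesis says only that the partition is $(\eta,r,d)$-regular, with the parameter governing $\|f_3\|_2$ left unspecified; you therefore have no usable global $L^2$ bound on $f_3$ beyond the trivial $\|f_3\|_\infty\le1$, and that is far too weak to beat a main term of order $(2^n)^{r(N)}/\|\cB\|^{|N|}$. The remedy, and what is actually done in [\ref{luo}], is to restrict the entire count from the outset to those $v$ with every $L_i^{*}(v)$ in the atom $a_i$ prescribed by the homomorphism $N\to R_{\eps,\zeta}$. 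On that restricted fibre, $f_1$ is constantly at least $\zeta$ in each coordinate, every occurrence of $f_3$ lives in an atom with $\bE[|f_3|^2\mid a_i]\le\eps^2$ so Cauchy--Schwarz produces the required factor of $\eps$, and the $f_2$ contribution is handled by a relative form of the von Neumann estimate that incorporates the atom indicators (themselves built from the degree-$\le d$ polynomials defining $\cB$); the functions $r$ and $\nu$ are tuned precisely to make both this and the equidistribution of the fibre go through.
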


From this, Lemma~\ref{blowup} follows readily.

\begin{proof}[Proof of Lemma~\ref{blowup}]
Given $k$ and $\delta$, let $d=2^k$, so $d >  |N|$ for every matroid $N$ of dimension at most $k$. Let $\delta'=\frac{\delta}{2}$.
By Lemma~\ref{counting}, there exist $\beta, \eta, \eps_0, r, \nu$ such that if, for some $\eps\leq \eps_0$, $N$ is homomorphic to the reduced matroid $R=R_{\eps,\delta'}$ given by an $(\eta,r,d)$-regular partition of an $n$-dimensional matroid $M$ with corresponding factor $\cB$ satisfying $r(M)\geq \nu(|\cB|)$, then $M$ has at least $\beta \frac{(2^n)^{r(N)}}{\|\cB\|^{|N|}}$ distinct $N$-restrictions. Fix such a choice of $\beta,\eta,\varepsilon,r,\nu$.

By Theorem~\ref{decomp}, for large enough $n$
we have a $(\varepsilon \delta'^{1/2},\eta,r,d)$-regular partition $1_M = f_1+f_2+f_3$ of $M$, whose corresponding factor $\cB$ has complexity $|\cB|\leq C$, where $C$ depends on only $\delta',\varepsilon, \eta,r,d$. Since there are finitely many non-isomorphic matroids $N$ of dimension at most $k$, we can then take $C$ to depend only on $\delta$ and $k$.

Let $n_0=\max(n_1,\nu(C))$, and suppose $r(M)=n\geq n_0$. Let $M_0=R_{\eps,\delta'}$, with the partition given as above. By the argument above, $M_0$ then satisfies the second claim of Lemma~\ref{blowup}, since $\|\cB\|^{|N|}$ is bounded for fixed $\delta$ and $k$.
It suffices to show that $M_0$ also satisfies the first claim of Lemma~\ref{blowup}. Observe that the only elements in $M - M_0$ are either
\begin{itemize}
    \item[(i)] In an atom $b$ of $\cB$ such that $E[|f_3(x)|^2\mid x\in b]>\varepsilon^2$, or
    \item[(ii)] In an atom $b$ of $\cB$ such that $E[f(x)\mid x\in b]<\delta'$.
\end{itemize}
Let $S$ be the union of the atoms $b$ of $\cB$ such that $\bE[|f_3(x)|^2\mid x\in b]>\varepsilon^2$. Then by condition (iv) of Theorem~\ref{decomp},
\[
\varepsilon^2 \delta' \geq \|f_3\|_2^2=\bE_x[|f_3(x)|^2] \geq \frac{|S|}{2^n} \varepsilon^2,
\]
so $|S|\leq \delta' 2^n$. Likewise, let $T$ be the subset of $\bF_2^n$ contained in atoms $b$ of $\cB$ such that $\bE[f(x)\mid x\in b]<\delta'$. Then $|T\cap M|< \delta' |T|\leq \delta' 2^n$.
So, $|M-M_0|\leq \delta 2^n$, as required.
\end{proof}

\section{Stability}

The following result is an analogue of the Andrasfai-Erd\H{o}s-S\'os theorem, proved by Govaerts and Storme [\ref{gs}] (also, see [\ref{g14}]).  

\begin{theorem}\label{aes}
	Let $t \ge 2$ be an integer. If $r \ge t+2$ and $M$ is a rank-$r$, $\PG(t-1,2)$-free matroid with $|M| > 2^{r} (1-2^{1-t}-3 \cdot 2^{-2-t})$, then $\chi(M) \le t-1$. 
\end{theorem}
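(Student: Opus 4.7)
I proceed by contradiction: suppose $\chi(M) \ge t$, so every codimension-$(t-1)$ subspace of $\bF_2^r$ meets $M$. Translating to projective geometry, the complement $M^c := (\bF_2^r \setminus \{0\}) \setminus M$ is a blocking set for $(t-1)$-dimensional projective subspaces of $\PG(r-1,2)$, and the hypothesis rearranges to $|M^c| < 2^{r-t+1} + 3 \cdot 2^{r-t-2} - 1$. Since the Bose--Burton theorem identifies the unique smallest such blocking set as an $(r-t)$-flat of size $2^{r-t+1} - 1$, the desired statement is a ``second-smallest blocking set'' result: any blocking set of the above size must contain a full $(r-t)$-flat, equivalently a linear subspace $W \le \bF_2^r$ of dimension $r - t + 1$ with $W \cap M = \emptyset$, contradicting $\chi(M) \ge t$.

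My plan is induction on $r$, with $t$ fixed. The base case $r = t+2$ reduces to a finite classification problem: the hypothesis forces $|M^c| \le 9$, and one verifies directly that every $(t-1)$-blocking set of $\PG(t+1,2)$ of size at most $9$ contains a $2$-flat. For $t = 2$ the threshold $|M^c| = 10$ is sharp, realized by the complement in $\bF_2^4 \setminus \{0\}$ of the triangle-free set $\{e_1, e_2, e_3, e_4, e_1+e_2+e_3+e_4\}$ (which has critical number $2$). For the inductive step $r \ge t + 3$: averaging $|M \cap H|$ over the $2^r - 1$ linear hyperplanes of $\bF_2^r$ (each nonzero vector lies in exactly $2^{r-1} - 1$ of them) together with the integrality of $|M|$ yields a hyperplane $H$ with $|M \cap H| > 2^{r-1}(1 - 2^{1-t} - 3 \cdot 2^{-2-t})$. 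Since $M \cap H$ is $\PG(t-1, 2)$-free in $H \cong \bF_2^{r-1}$, the inductive hypothesis gives $\chi(M \cap H) \le t - 1$, producing a subspace $V \le H$ of codimension $t-1$ in $H$ (equivalently codimension $t$ in $\bF_2^r$) with $V \cap M = \emptyset$. I then try to extend $V$ to a codimension-$(t-1)$ subspace $V' = V + \langle e \rangle$ of $\bF_2^r$ disjoint from $M$ by choosing $e \in \bF_2^r \setminus H$ with $(V + e) \cap M = \emptyset$.

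The main obstacle is exactly this extension step: a priori every one of the $2^{t-1}$ cosets of $V$ contained in $\bF_2^r \setminus H$ could meet $M$. If so, then applying $\chi(M) \ge t$ to the $2^{t-1} - 1$ codim-$(t-1)$ subspaces $V + \langle e \rangle$ with $e \in H \setminus V$ forces every coset of $V$ in $H \setminus V$ also to meet $M$, so each of the $2^t - 1$ nontrivial cosets of $V$ in $\bF_2^r$ contains a point of $M$. Combining this covering property with $M \cap V = \emptyset$ and repeated applications of $\PG(t-1,2)$-freeness---in particular, ruling out the situation in which $M$ contains a complete affine $(t-1)$-dimensional subspace of $\bF_2^r \setminus H$ alongside the nearly-Bose--Burton portion $H \setminus V$ of $M \cap H$---allows a counting argument forcing $|M^c| \ge 2^{r-t+1} + 3 \cdot 2^{r-t-2} - 1$, the desired contradiction. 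The constant $3 \cdot 2^{-2-t}$ is tight because it measures precisely the covering deficit that the combined constraints impose on $|M^c|$; making this combinatorics quantitative (and robustly propagating it through the induction, especially when the inductively produced $V$ is not unique and several candidates must be tested) is the main technical burden of the proof.
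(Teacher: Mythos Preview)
The paper does not prove this theorem at all: it is quoted as a known result of Govaerts and Storme~[\ref{gs}] (with a pointer also to Geelen~[\ref{g14}]), so there is no ``paper's own proof'' to compare against. Your reformulation---$M$ is $\PG(t-1,2)$-free iff $M^c$ blocks every $(t-1)$-flat, and the conclusion $\chi(M)\le t-1$ is that $M^c$ contains an $(r-t)$-flat---is exactly the setting of~[\ref{gs}], so at the level of framework you are aligned with the cited source.

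That said, what you have written is a plan with two substantive gaps rather than a proof. First, the ``base case $r=t+2$ reduces to a finite classification'' is not actually finite: while $|M^c|\le 9$ is independent of $t$, the ambient space $\PG(t+1,2)$ is not, so ``one verifies directly'' hides an infinite family of verifications. You would need an argument (e.g.\ showing that a $9$-point set blocking all codimension-$2$ flats must live, up to the blocking condition, in a bounded-dimensional subspace) to reduce this to a genuinely finite check. Second, the extension step is the entire content of the theorem, and your sketch does not close it: knowing that every nontrivial coset of $V$ meets $M$, together with $\PG(t-1,2)$-freeness, does not by itself yield the bound $|M^c|\ge 2^{r-t+1}+3\cdot 2^{r-t-2}-1$; this is precisely where the Govaerts--Storme classification of the second-smallest blocking sets (or the geometric argument in~[\ref{g14}]) does real work, and your paragraph does not supply a substitute. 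The averaging step to find a good hyperplane is fine once one checks that the average of $|M\cap H|$ strictly exceeds the integer $2^{r-1}(1-2^{1-t}-3\cdot 2^{-2-t})$, but this needs a short computation rather than just an appeal to integrality.
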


Combined with the material in the last section, this is enough to prove Theorem~\ref{esstability}, which we restate here.

\begin{theorem}\label{stability}
	Let $N$ be a matroid. For all $\eps > 0$ there exists $\delta > 0$ and $r_0 \in \nni$ such that, if $r \ge r_0$ and $M$ is a rank-$r$, $N$-free matroid with $|M| \ge (1-2^{1-\chi(N)} - \delta)2^{r}$, then $|M \Delta B| < \eps 2^{r}$ for some matroid $B \cong BB(r-1,2,\chi(N)-1)$.
\end{theorem}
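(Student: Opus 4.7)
Write $k = \chi(N)$. The case $k=1$ (i.e. $N$ affine) is immediate from Theorem~\ref{es}, since then the bound to be proved reduces to $|M| < \eps 2^{r}$, which follows from $\ex(\{N\},r) = o(2^r)$ for $r$ large. So I will focus on $k \ge 2$.

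The plan is to use Lemma~\ref{blowup} to replace $M$ by a nearby matroid $M_0$ that can be controlled by the Andr\'asfai--\erdos--S\'os analogue Theorem~\ref{aes}. Concretely, choose a small parameter $\gamma>0$ (to be fixed below), and apply Lemma~\ref{blowup} with the matroid $N$ fixed and dimension parameter $\dim(N)$ and accuracy $\gamma$, obtaining $n_0,\alpha > 0$. Viewing $M$ as a subset of $\bF_2^r$ (so $r(M)=\dim(M)=r$), and assuming $r \ge n_0$, we get a matroid $M_0\subseteq \bF_2^r$ with $|M \setminus M_0|\le \gamma 2^r$ and the property that any matroid of dimension $\le \dim(N)$ homomorphic to $M_0$ yields $\alpha 2^r$-many restrictions in $M$. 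Since $M$ is $N$-free, $N$ itself is not homomorphic to $M_0$. Because $\chi(N)=k$, $N$ admits a homomorphism into $\PG(k-1,2)$; composing with any embedding $\PG(k-1,2)\hookrightarrow M_0$ would then yield a homomorphism $N \to M_0$. Hence $M_0$ is $\PG(k-1,2)$-free.

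Next I feed this into Theorem~\ref{aes} with $t = k$. We have
\[
|M_0| \ge |M| - \gamma 2^r \ge (1 - 2^{1-k} - \delta - \gamma) 2^r,
\]
and by choosing $\delta,\gamma$ small (for instance both less than $\min(\eps/4,\ 2^{-k-3})$) we arrange $\delta+\gamma < 3\cdot 2^{-2-k}$, so Theorem~\ref{aes} applies (requiring $r \ge k+2$) and gives $\chi(M_0) \le k-1$. Hence $M_0$ is disjoint from some codimension-$(k-1)$ subspace $V \subseteq \bF_2^r$, and if we set $B := \bF_2^r \setminus V \setminus \{0\}$, then $B \cong BB(r-1,2,k-1)$, $|B| = (1-2^{1-k})2^r$, and $M_0 \subseteq B$.

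Finally, the edit distance calculation is straightforward: $|M \setminus B| \le |M \setminus M_0| + |M_0 \setminus B| \le \gamma 2^r$, while $|B \setminus M| = |B| - |M \cap B| \le |B| - (|M| - |M \setminus B|) \le (\delta + \gamma)2^r$. Summing gives $|M \Delta B| \le (\delta + 2\gamma)2^r < \eps 2^r$, as required; $r_0$ is taken to be $\max(n_0, k+2)$. The main work has already been compressed into Lemmas~\ref{blowup} and Theorem~\ref{aes}; the only real step in this argument is the transfer from $N$-freeness of $M$ to $\PG(k-1,2)$-freeness of $M_0$ via the weak counting lemma, and that is where I expect a reviewer to scrutinize the parameter choices most carefully.
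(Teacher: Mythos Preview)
Your proof is correct and follows essentially the same route as the paper's: apply Lemma~\ref{blowup} to pass to $M_0$, observe that $N$-freeness of $M$ forces $\PG(k-1,2)$-freeness of $M_0$ via the homomorphism $N \to \PG(k-1,2)$, invoke Theorem~\ref{aes} to place $M_0$ inside a Bose--Burton geometry $B$, and finish with a short edit-distance computation. The only cosmetic differences are that you split the accuracy into two parameters $\gamma,\delta$ (the paper uses a single $\delta$ for both roles) and you treat the affine case $k=1$ separately, which the paper absorbs into the general argument.
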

\begin{proof}
	Let $N$ be a matroid with $\chi(N) = t$. Let $\eps > 0$ and let $\delta = \min(\tfrac{1}{7}\eps, 2^{-2-t})$. Let $n_0$ be given by Lemma~\ref{blowup}, invoked with $\delta$ and with $k = r(N)$, and let $r_0 = \max(t+2,n_0)$.  Let $r \ge r_0$ and $M$ be a rank-$r$ $N$-free matroid with $|M| \ge (1-2^{1-t}-\delta)2^r$ . Let $M_0$ by given by Lemma~\ref{blowup}. So $|M_0| \ge (1-2^{1-t}-2\delta)2^r > (1-2^{1-t}-3\cdot 2^{-2-t})2^r$. If $M_0$ has a $\PG(t-1,2)$-restriction, then since $N$ is homomorphic to $\PG(t-1,2)$, the matroid $M$ has an $N$-restriction, a contradiction. Thus $M_0$ has no $\PG(t-1,2)$-restriction, so by Theorem~\ref{aes}, $M_0$ is a restriction of some $B \cong \BB(r-1,2,t-1)$, giving $|B-M_0| \le 2\delta 2^r$.  But $|M-M_0| \le \delta 2^{r}$; it follows that $|M - B| \le 3\delta 2^r$. Now
	\[(1-2^{1-t}-\delta)2^r < |M| = |M \cap B| + |M-B| \le |M \cap B| + 3\delta 2^r,\] so $|M \cap B| \ge (1-2^{1-t}-4\delta)2^r = |B|-(4\delta)2^r$, so $|B-M| \le (4\delta)2^r$. Therefore $|M \Delta B| \le (7\delta)2^r < \eps 2^r$, as required.
\end{proof}

\section{Decomposition Families}

	If $N$ is a matroid with $\dim(N) \le n-t$, then let $N^{n,t} \subseteq \bF_2^n$ denote the unique (up to isomorphism) rank-$n$ matroid for which there is a $t$-codimensional subspace $F$ such that $N^{n,t}\cap F$ is an $N$-restriction of $M$, and $\bF_2^n-F \subseteq N^{n,t}$. Thus $N^{n,t}$, which is essentially obtained by placing a copy of $N$ inside the empty $t$-codimensional flat of $\BB(n-1,2,t)$, contains $\BB(n-1,2,t)$ and is contained in $\PG(n-1,2)$. 

	\begin{definition}Let $\cN$ be a set of matroids and $k = \min_{N \in \cN}\chi(N)-1$. Note that $\dim(N) > k$ for all $N \in \cN$. The \emph{decomposition family} $\cD(\cN)$ is the collection of (isomorphism classes of) restriction-minimal matroids $D$ for which some $N \in \cN$ is contained in $D^{\dim(N),k}$.
	\end{definition}	
	 Equivalently, a matroid $D$ is in $\cD$ if $D$ is restriction-minimal subject to being a restriction of the intersection of some $N \in \cN$ with a $k$-codimensional subspace. A third definition is that $\cD$ is the set of restriction-minimal matroids $D$ for which removing some $D$-restriction from some $N \in \cN$ drops the critical number of $N$ to $k$. 
	
	 Note that if $\cN$ is finite, then so is $\cD$. Note also that, if $N \in \cN$ has $\chi(N) = k+1$, then $\AG(\dim(N)-k-1,2)^{\dim(N),k} \cong \BB(\dim(N)-1,2,k+1)$, which has an $N$-restriction; it follows that $\cD$ must contain an affine matroid. Finally, observe that by minimality, every $D \in \cD$ satisfies $r(D) = \dim(D)$.
	
	\begin{theorem}\label{maintech}
		Let $\cN$ be a finite set of matroids and define  $k = \min_{N \in \cN}\chi(N)-1$. Then, for all sufficiently large $n$, \[\ex(\cN,n) = 2^n(1-2^{-k})+\ex(\cD(\cN),n-k)\] and the extremal $n$-dimensional matroids in $\EX(\cN)$ are those of the form $M \cong (M_0)^{n,k}$ for some $(n-k)$-dimensional matroid $M_0$ that is extremal in $\EX(\cD(\cN))$.
	\end{theorem}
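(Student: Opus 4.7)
\emph{Plan.} The argument splits into a lower-bound construction and a stability-based upper bound; the extremal characterisation drops out of the equality case.

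\emph{Lower bound.} Take an extremal $M_0 \in \EX(\cD(\cN))$ of dimension $n-k$ and set $M := (M_0)^{n,k}$, which has size $2^n(1-2^{-k}) + \ex(\cD(\cN), n-k)$. To verify that $M$ is $\cN$-free, suppose some injective linear $\varphi\colon \bF_2^{\dim N} \to \bF_2^n$ embeds $N \in \cN$ into $M$, and set $F' := \varphi^{-1}(F)$ where $F$ is the codim-$k$ subspace used to build $M$. Since $\chi(N) \ge k+1 > k = \mathrm{codim}(F)$, $F'$ is proper with $\mathrm{codim}(F') \le k$, so $\chi(N \setminus F') \le k$. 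By the definition of $\cD(\cN)$, $N \cap F'$ therefore contains some $D \in \cD(\cN)$ as a restriction, and $\varphi(D) \subseteq M_0$ contradicts $M_0 \in \EX(\cD(\cN))$.

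\emph{Upper bound, stability phase.} For $\cN$-free $M$ of dimension $n$, I may assume $|M| > (1-2^{-k})2^n$. Fix $N_0 \in \cN$ with $\chi(N_0) = k+1$, pick $\delta > 0$ with $\delta < 2^{-k}/\max_{N \in \cN}|N|$, and apply Theorem~\ref{esstability} to $N_0$ with parameter $\delta$. Combined with the matroidal \erdos-Stone theorem (Theorem~\ref{es}), which gives $|M| \le (1-2^{-k}+o(1))2^n$, this yields for $n$ sufficiently large a Bose--Burton geometry $B = \bF_2^n \setminus F \cong \BB(n-1,2,k)$ with $|M \Delta B| < \delta 2^n$. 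Write $A := B \setminus M$ and $X := M \cap F$; then $|A|, |X| \le \delta 2^n$ and $|M| = |B| + |X| - |A|$, so it suffices to show $|X| \le \ex(\cD(\cN), n-k)$.

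\emph{Upper bound, sparse phase.} Suppose $X$ contains some $D \in \cD(\cN)$ as a restriction. Restriction-minimality forces $D$ to be full-rank, and yields $N \in \cN$ and a codim-$k$ subspace $F'' \subseteq \bF_2^{\dim N}$ with $D \cong N \cap F''$; let $\alpha\colon \bF_2^{\dim D} \hookrightarrow F$ realise the embedding $\alpha(D) \subseteq X$. I would extend $\alpha$ to an injective linear $\varphi\colon \bF_2^{\dim N} \to \bF_2^n$ with $\varphi(F'') \subseteq F$ (extending the $D$-embedding to $F''$ by a random linear map on a chosen complement of the span of $D$), and with $\varphi$ sending a chosen complement of $F''$ into $\bF_2^n \setminus F$ via a random injective linear map. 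A straightforward check shows that for each $z \in N \setminus F''$, $\varphi(z)$ is uniform over some coset of $F$ in $\bF_2^n \setminus F$, giving $\pr[\varphi(z) \in A] \le |A|/|F| < \delta \cdot 2^k$. A union bound over the $\le |N|$ such $z$ gives total failure probability below $|N| \delta \cdot 2^k < 1$ by the choice of $\delta$; hence some valid $\varphi$ has $\varphi(N) \subseteq M$, contradicting $\cN$-freeness. Therefore $X$ is $\cD(\cN)$-free, so $|X| \le \ex(\cD(\cN), n-k)$.

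\emph{Equality and main obstacle.} The inequality $|M| \le 2^n(1-2^{-k}) + \ex(\cD(\cN), n-k)$ now follows, matching the construction. Equality forces $|A| = 0$ and $X$ extremal in $\EX(\cD(\cN))$, so $M = B \cup X \cong X^{n,k}$, giving the claimed extremal characterisation. The main obstacle is the probabilistic extension argument: bridging the gap between stability's bound $|X| \le \delta 2^n$ and the much sharper $|X| \le \ex(\cD(\cN), n-k)$ requires careful setup of the random $\varphi$ so that injectivity holds with positive probability, $\varphi(z)$ genuinely distributes uniformly over the claimed coset, and the minimality clause $D \cong N \cap F''$ is correctly exploited so that elements of $\varphi(F'' \setminus \mathrm{span}(D))$ do not need to lie in $M$.
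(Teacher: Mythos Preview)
Your proposal is correct and follows essentially the same route as the paper: construct the lower bound, apply stability (Theorem~\ref{esstability}) to locate $M$ near a Bose--Burton geometry $B = \bF_2^n \setminus F$, then use a density argument to show $M \cap F \in \EX(\cD(\cN))$, reading off the equality case at the end. The only difference is in how the density step is executed: the paper fixes a $(\dim N)$-dimensional subspace $V_0 \subseteq W$ containing the $D$-copy and averages over all $(\dim N + k)$-dimensional subspaces $V$ with $V \cap W = V_0$ to find one with $V \setminus V_0 \subseteq M$, whereas you randomise the extension of the $D$-embedding to an $N$-embedding. These are equivalent averaging arguments; the paper's version is marginally cleaner in that injectivity and $\varphi(N\setminus F'')\subseteq B$ come for free and the minimality identity $N\cap F'' = D$ (which you correctly derive) is not needed, while your claim that $\varphi(z)$ is uniform over a single coset of $F$ is not quite right under the natural random model (it is uniform over all of $B$), though your bound $|A|/|F|$ remains a valid overestimate and the union bound goes through with your choice of $\delta$.
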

	\begin{proof}
		If $k = 0$ then $\cD(\cN) = \cN$ and the result is trivial; assume that $k \ge 1$. We first show that the formula is a lower bound. Indeed, let $M_0 \in \EX(\cD(\cN))$ be extremal with $\dim(M_0) = n-k$, and let $M \cong (M_0)^{n,k}$, where $F \subseteq \bF_2^n$ is the $(n-k)$-dimensional subspace for which $M \cap F$ is an $M_0$-restriction of $M$.  For any restriction $N'$ of $M$ with $\chi(N') \ge k+1$, the intersection $N' \cap F$ contains a $D'$-restriction of $N'$ for some $D' \in \cD(\{N'\})$, but $M_0$ has an $N' \cap F$-restriction, so $N' \cap F$ contains no $D$-restriction for any $D \in \cD(\cN)$. It follows that there is a matroid in $\cD(\{N'\})$ containing no restriction in $\cD(\cN)$. If $N' \in \cN$, this is a contradiction; thus, $M \in \EX(\cN)$ and so $\ex(\cN,n) \ge |M| = 2^n - 2^{n-k} + |M_0| = 2^n(1-2^{-k}) + \ex(\cD(\cN),n-k)$.

	 	Let $N_0 \in \cN$ satisfy $\chi(N_0) = k+1$ and let $d_0 = \max_{D \in \cD(\cN)}\dim(D)$ and $\eps = \tfrac{1}{2}(2^{d_0+k}-2^{d_0})^{-1}$, and let $r_0$ and $\delta$ be given by Theorem~\ref{stability} for $N_0$ and $\eps$. Suppose now that $M \in \EX(\cN)$ satisfies $\dim(M) = n \ge r_0$ and $|M| \ge 2^n(1-2^{-k}) + \ex(\cD(\cN),n-k)$. Since $M$ is $N_0$-free and satisfies $|M| \ge (1-2^{1-\chi(N_0)})2^n$, Theorem~\ref{stability} gives that $|M \Delta B| < \eps 2^n$ for some matroid $B \cong BB(n-1,2,k)$. So $\bF_2^n$ has an $(n-k)$-dimensional subspace $W$ with $|M \del (\bF_2^n-W)| < \eps 2^n \le 2\eps(2^n-2^{n-k})$. 
		 
		Suppose that $M \cap W$ has a $D$-restriction for some $D \in \cD(\cN)$, and let $N \in \cN$ be such that $D^{\dim(N),k}$ has an $N$-restriction, with $d = \dim(N)$. Let $V_0 \subseteq W$ be a $d$-dimensional subspace containing a $D$-restriction of $M$, and $\cV$ be the collection of $(d+k)$-dimensional subspaces $V$ of $\bF_2^n$ for which $V \cap W = V_0$. Note that each $V \in \cV$ contains precisely $2^{d+k}-2^d$ elements of $\bF_2^n-W$, and that each $x \in \bF_2^n-W$ is in the same number of subspaces $V \in \cV$. Since $|M \del (\bF_2^n-W)| < 2\eps (2^n-2^{n-k})$, an averaging argument gives that some $V_1 \in \cV$ satisfies $|(V_1-V_0) \cap M| > (2^{d+k}-2^d)(1-2\eps)$; using $d \le d_0$ and the choice of $\eps$, this implies that $|(V_1-V_0) \cap M| > 2^{d+k}-2^d-1$ and so $V_1-V_0 \subseteq M$. It follows that $M \cap V_1$ has a $D^{d+k,k}$-restriction and so has an $N$-restriction, a contradiction. Therefore $M \cap W \in \EX(\cD(\cN))$.
		 
		Let $M_0$ be the $(n-k)$-dimensional matroid $\varphi(M \cap W)$, where $\varphi \colon W \to \bF_2^{n-k}$ is an isomorphism. Note that $M_0 \in \EX(\cD(\cN))$ and that $M$ is a restriction of $(M_0)^{n,k}$. Thus $|M| \le 2^n - 2^{n-k} + |M_0| \le 2^n(1-2^{-k}) + \ex(\cD(\cN),n-k)$, with equality precisely when $M = (M_0)^{n,k}$ and $M_0$ is extremal in $\EX(\cD(\cN),n-k)$. This gives the result. 	 
	\end{proof}

	In general, since $\cD(\cN)$ always contains some affine matroid, the function $\ex(\cD(\cN),n)$ is difficult to compute. The analogous problem in graph theory also has the same flavour, reducing arbitrary Tur\'an problems to the difficult `sparse' ones where bipartite graphs are excluded. However, in many natural cases in the matroidal setting, $\ex(\cD(\cN),n)$ can easily be determined exactly, or reduced to a finite computation. 

	An $n$-dimensional matroid is \emph{free} if it is a basis of $\bF_2^n$; all such $n$-dimensional matroids are isomorphic. Note that any matroid of rank at least $r$ contains an $r$-element free restriction. Thus, $\cD(\cN)$ contains at most one free matroid $I$. If $\cD$ consists of just a single free matroid $I$, we trivially have $\ex(\cD,n) = 2^{|I|-1}-1$ for all $n \ge |I|-1$, with equality only for projective geometries. The family $\cD(\cN)$ has this form precisely when some $N \in \cN$ can have its critical number dropped to $k$ by removing a linearly independent set $I$, while removing any linearly dependent set of at most $|I|$ from any $N \in \cN$ cannot achieve this. 
	
	\begin{corollary}
		Let $\cN$ be a finite collection of matroids and let $k = \min_{N \in \cN}(\chi(N))-1$. If there is some $t > 0$ such that 
		\begin{enumerate}
			\item\label{inddrop} there is a $t$-element linearly independent subset $I$ of some $N \in \cN$ for which $\chi(N \del I) = k$, and
			\item\label{justind} $\chi(N' \del X) \ge k+1$ for every $N' \in \cN$ and each linearly dependent set $X \subseteq N'$ with $|X| \le t$, 
		\end{enumerate}
	then $\ex(\cN,n) = 2^n(1-2^{-k}) + 2^{t-1}-1$ for all sufficiently large $n$, with equality precisely for matroids of the form $\PG(t-2,2)^{n,k}$.
	\end{corollary}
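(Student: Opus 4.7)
The strategy is to apply Theorem~\ref{maintech}, which, for sufficiently large $n$, gives $\ex(\cN, n) = 2^n(1 - 2^{-k}) + \ex(\cD(\cN), n-k)$ with extremal matroids of the form $(M_0)^{n,k}$ for extremal $M_0$ in $\EX(\cD(\cN))$. The corollary thus reduces to showing that $\ex(\cD(\cN), n-k) = 2^{t-1} - 1$ with the unique (up to isomorphism) extremal matroid being $\PG(t-2, 2)$; by the paragraph immediately preceding the corollary, this in turn reduces to establishing that $\cD(\cN)$ consists of exactly one matroid, namely the free matroid $I$ of dimension $t$.

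To verify $\cD(\cN) = \{I\}$, I would proceed in three parts. First, hypothesis (1) provides an $I$-restriction of some $N \in \cN$ whose removal drops $\chi(N)$ to $k$, so $I$ satisfies the defining property of $\cD(\cN)$, and hence some free matroid $I_s$ with $s \le t$ lies in $\cD(\cN)$; note that $\cD(\cN)$ contains at most one free matroid by the discussion preceding the corollary. Second, hypothesis (2) directly rules out any linearly dependent matroid $D$ with $|D| \le t$ from $\cD(\cN)$, since every $D$-restriction of every $N' \in \cN$ would be a linearly dependent set of size at most $t$ in $N'$, for which (2) forbids the defining property. By restriction-minimality, a larger matroid in $\cD(\cN)$ with a small-sized minimal witnessing structure would reduce to the previous cases, so no other matroid of size at most $t$ survives in $\cD(\cN)$.

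The remaining and most delicate step is to verify $s = t$, i.e., to rule out smaller free matroids $I_s$ with $s < t$ from $\cD(\cN)$. Suppose $I_s \in \cD(\cN)$; then some $N' \in \cN$ has an $s$-element linearly independent $Y \subseteq N'$ with $\chi(N' \setminus Y) \le k$. The plan is to extend $Y$ by adjoining one or more further elements of $N'$ (taken from $\mathrm{span}(Y) \cap (N' \setminus Y)$ when nonempty, or else from more generic positions) to produce a linearly dependent set $X \supseteq Y$ with $|X| \le t$ and still $\chi(N' \setminus X) \le \chi(N' \setminus Y) \le k$, contradicting (2). Once $\cD(\cN) = \{I\}$ is established, the extremal function $\ex(\{I\}, n-k) = 2^{t-1} - 1$, attained uniquely by $\PG(t-2, 2)$, yields the stated answer via Theorem~\ref{maintech}. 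The main obstacle is precisely this extension argument: when $N' \cap \mathrm{span}(Y) = Y$, no single additional element creates a dependence, and one must carefully combine elements of $N'$ outside $\mathrm{span}(Y)$ so that their span meets $\mathrm{span}(Y)$ in a point of $N'$, while keeping $|X| \le t$.
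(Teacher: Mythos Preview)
Your strategy—apply Theorem~\ref{maintech} and reduce to showing $\cD(\cN)=\{I_t\}$—is exactly the paper's argument, which is the short paragraph immediately preceding the corollary. The obstacle you flag in your third step, however, is genuine and is not an artefact of your proof plan: hypotheses~(\ref{inddrop}) and~(\ref{justind}) as stated do \emph{not} pin down the size of the free matroid in $\cD(\cN)$, and your proposed extension argument cannot work in general. For a concrete failure, take $\cN=\{\PG(1,2)\}$, so $k=1$. With $t=2$, condition~(\ref{inddrop}) holds (delete any two of the three points; the remaining single point has critical number $1$) and condition~(\ref{justind}) is vacuous (a simple matroid has no dependent subset of size at most $2$). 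But removing a \emph{single} point already drops the critical number to $1$, so $\cD(\cN)=\{I_1\}$ and $\ex(\cN,n)=2^{n-1}$, not $2^{n-1}+1$; the matroid $\PG(0,2)^{n,1}$ that the corollary would name as extremal in fact contains a $\PG(1,2)$-restriction. Your extension idea cannot rescue this case, because the only dependent subset of $\PG(1,2)$ has size $3>t$.

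What is implicitly intended, and what the paper's one-line justification glosses over, is that $t$ be \emph{minimal} subject to~(\ref{inddrop}); equivalently, that the hypothesis also forbid $\chi(N'\setminus Y)\le k$ for independent $Y\subseteq N'$ with $|Y|<t$. With that extra assumption your third step is immediate (there is nothing to extend: $s=t$ by minimality), and the rest of your outline goes through as written.
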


	Note that Theorem~\ref{criticalcritical} follows immediately from the above result. For a graph $G$, it is well-known (see [\ref{oxley} p.589]) that $\chi(M(G)) = \lceil \log_2 \chi(G) \rceil$, where $\chi(G)$ is the chromatic number, and moreover that a subset $X$ of $M(G)$ is linearly independent if and only if the corresponding set of edges in $G$ is acyclic. If $2^{k} < s \le 2^{k+1}$, we thus have $\chi(M(K_s)) = k+1$. It is routine to check that removing a matching of size $s-2^k$ from $K_s$ drops the chromatic number to $2^k$ (and hence the critical number to $k$) and that removing any other set of $(s-2^k)$ edges does not drop the chromatic number as far. These facts, combined with the above corollary, imply Theorem~\ref{cliques}. 
	
	If (\ref{inddrop}) above holds for some $t$ but (\ref{justind}) does not, then things are still fairly nice. Condition (\ref{inddrop}) guarantees that $\cD(\cN)$ contains a free matroid of size at most $t$, and thus that $\EX(\cD(\cN))$ contains nothing of rank at least $t$, so $\ex(\cD(N),n)$ is constant for all $n \ge t$, and can be computed by just considering the finitely many nonisomorphic matroids of rank at most $t-1$.
	
	\begin{corollary}\label{finiteproblem}
		Let $\cN$ be a finite set of matroids such that some $N_0 \in \cN$ has a linearly independent set $I$ for which $\chi(N_0 \del I) \le k$, where  $k = \displaystyle \min_{N \in \cN}(\chi(N))-1$; let $t$ be the size of a smallest such $I$. Then 
		\[\ex(\{N\},n) = 2^n(1-2^{-k}) + \ex(\cD(\cN),t-1)\]
		 for all sufficiently large $n$.
	\end{corollary}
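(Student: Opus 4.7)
The plan is to combine Theorem~\ref{maintech} with a structural observation about $\cD(\cN)$. By Theorem~\ref{maintech}, for sufficiently large $n$ we have $\ex(\cN,n) = 2^n(1-2^{-k}) + \ex(\cD(\cN),n-k)$, so it suffices to show that $\ex(\cD(\cN),m) = \ex(\cD(\cN),t-1)$ for every $m \ge t-1$; in turn this reduces to proving that every $\cD(\cN)$-free matroid has rank strictly less than $t$.

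The first step is to show that the free matroid $F$ of dimension $t$ belongs to $\cD(\cN)$. The defining property (some $N \in \cN$ has an $F$-restriction $F' \subseteq N$ with $\chi(N \del F') \le k$) is witnessed by $N_0$ together with the hypothesized linearly independent set of size $t$. For restriction-minimality I need no free matroid of smaller dimension to satisfy the property; but any such witness would be a linearly independent subset of size $s<t$ in some $N \in \cN$ whose deletion drops the critical number to at most $k$, directly contradicting the minimality of $t$.

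Next I would argue that every $D \in \cD(\cN)$ with $D \not\cong F$ has $\dim(D) < t$, using the fact (noted after the definition) that $r(D)=\dim(D)$ for $D \in \cD(\cN)$. If $r(D) \ge t$, then any $t$ linearly independent elements of $D$ yield an $F$-restriction; when $D \not\cong F$ this restriction is proper, violating restriction-minimality since $F$ itself already has the defining property. Together with $F \in \cD(\cN)$ this forces every $\cD(\cN)$-free matroid to have rank at most $t-1$, because rank $\ge t$ would immediately produce an $F$-restriction, which is a $\cD(\cN)$-restriction.

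The remainder is bookkeeping. For any $m \ge t-1$, a $\cD(\cN)$-free matroid $M$ of dimension $m$ has rank at most $t-1$, so it is isomorphic (via a linear isomorphism from its span to $\bF_2^{t-1}$) to a $\cD(\cN)$-free matroid of dimension $t-1$ of the same size; conversely, any $\cD(\cN)$-free matroid of dimension $t-1$ embeds in $\bF_2^m$ without creating new restrictions. Hence $\ex(\cD(\cN),m) = \ex(\cD(\cN),t-1)$, and substituting into Theorem~\ref{maintech} gives the corollary. No substantial obstacle arises; the content of the argument is concentrated in identifying $F \in \cD(\cN)$, which cleanly encodes the extremality of $t$.
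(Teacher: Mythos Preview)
Your proof is correct and follows essentially the same approach as the paper: apply Theorem~\ref{maintech}, then use the fact that the free matroid of size $t$ lies in $\cD(\cN)$ to conclude that every $\cD(\cN)$-free matroid has rank at most $t-1$, so $\ex(\cD(\cN),m)$ stabilises at $\ex(\cD(\cN),t-1)$. Your intermediate claim that every non-free $D \in \cD(\cN)$ has $\dim(D) < t$ is true but unnecessary, since the presence of $F$ in $\cD(\cN)$ alone already forces the rank bound via the final ``because'' clause you give.
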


	 Using $\chi(M(G)) = \lceil \log_2 \chi(G) \rceil$, the above directly implies Theorem~\ref{graphs}. In specific cases, we can compute $\ex(\cD(\cN),t-1)$ explicitly; for example, the matroid $M(O_6)$ of the octahedron has critical number $2$, and satisfies $\cD(\{M(O_6)\}) = \{I_4,C_4\}$, where $I_4$ is a basis for $\bF_2^4$, and $C_4$ is a minimal linearly dependent set of four vectors in $\bF_2^3$. If $M \in \EX(\{I_4,C_4\})$ then $r(M) \le 3$ and it is easy to check that $|M| \le 4$. Thus $\ex(\{M(O_6)\},n) = 2^n-2^{n-1}+ 4 = 2^{n-1}+4$ for all large $n$, giving Theorem~\ref{oct}.

\section*{References}
\newcounter{refs}
\begin{list}{[\arabic{refs}]}
{\usecounter{refs}\setlength{\leftmargin}{10mm}\setlength{\itemsep}{0mm}}

\item \label{bfhhl}
A. Bhattacharyya, E. Fischer, H. Hatami, P. Hatami, S. Lovett,
Every locally characterized affine-invariant property is
testable,
Proceedings of the forty-fifth annual ACM symposium on Theory
of computing (2013), ACM, 429--436.

\item \label{bq}
J.E. Bonin, H. Qin,
Size functions of subgeometry-closed classes of
representable combinatorial geometries,
Discrete Math. 224, (2000) 37--60.

\item\label{bb}
R. C. Bose, R. C. Burton, 
A characterization of flat spaces in a finite geometry and the uniqueness of the Hamming and the MacDonald codes, 
J. Combin. Theory 1 (1966), 96--104. 

\item\label{g14}
J. Geelen,
A geometric version of the Andrasfai-\erdos-S\'os theorem,
arXiv:1401.5769

\item\label{gn12}
J. Geelen, P. Nelson, 
An analogue of the Erd\H os-Stone theorem for finite geometries, 
Combinatorica 35 (2015), 209--214. 

\item\label{gn14}
J. Geelen, P. Nelson, 
Odd circuits in dense binary matroids, 
Combinatorica 37 (2017), 41--47. 

\item\label{gn16}
J. Geelen, P. Nelson, 
The critical number of dense triangle-free binary matroids,
J. Combin. Theory Ser. B 116(2016), 238--249.

\item\label{gs}
P. Govaerts, L. Storme,
The classification of the smallest nontrivial blocking sets in $\PG(n,2)$, 
J. Combin. Theory Ser. A 113 (2006), 1543--1548.

\item\label{gw10}
W. T. Gowers, J. Wolf, 
The true complexity of a system of linear equations, 
Proc. London Math. Soc. 100 (2010), 155--176. 

\item\label{hhl}
H. Hatami, P. Hatami, S. Lovett,
General systems of linear forms: Equidistribution and true complexity,
Adv. Math. 292 (2016), 446--477. 

\item\label{luo}
S. Luo, 
A counting lemma for binary matroids and applications to extremal problems,
arXiv:1610.09587 [math.CO]

\item \label{oxley}
J. G. Oxley, 
Matroid Theory,
Oxford University Press, New York (2011).

\item\label{s74}
M. Simonovits, 
Extremal graph problems with symmetrical extremal graphs, additional chromatic conditions, 
Discrete Math. 7 (1974), 349--376.

\item\label{s97}
M. Simonovits, 
How to solve a Tur\'an type extremal graph problem? (linear decomposition), Contemporary trends in discrete mathematics (Stirin Castle, 1997), pp. 283–305, DIMACS Ser. Discrete Math. Theoret. Comput. Sci., 49, Amer. Math. Soc., Providence, RI, 1999.

\item\label{tidor}
J. Tidor, 
Dense binary $\PG(t-1,2)$-free matroids have critical number $t-1$ or $t$, 
arXiv:1508.07278 
\end{list}

\end{document}